\pgfplotsset{compat=1.7}
\tikzset{->-/.style={decoration={
  markings,
  mark=at position #1 with {\arrow{>}}},postaction={decorate}},
  ->-/.default=0.53
}
\tikzset{-<-/.style={decoration={
  markings,
  mark=at position #1 with {\arrow{<}}},postaction={decorate}},
  -<-/.default=0.53
}
\pgfplotsset{soldot/.style={color=blue,only marks,mark=*}} \pgfplotsset{holdot/.style={color=blue,fill=white,only marks,mark=*}}
\DeclareFontFamily{U}{mathx}{\hyphenchar\font45}
\DeclareFontShape{U}{mathx}{m}{n}{
      <5> <6> <7> <8> <9> <10>
      <10.95> <12> <14.4> <17.28> <20.74> <24.88>
      mathx10
      }{}
\DeclareSymbolFont{mathx}{U}{mathx}{m}{n}
\DeclareMathAccent{\widecheck}{0}{mathx}{"71}
\DeclareMathAccent{\wideparen}{0}{mathx}{"75}
\newcommand\Ccancel[2][black]{
    \let\OldcancelColor\CancelColor
    \renewcommand\CancelColor{\color{#1}}
    \cancel{#2}
    \renewcommand\CancelColor{\OldcancelColor}
}
\definecolor{bluepigment}{rgb}{0.2, 0.2, 0.6}
\definecolor{cobalt}{rgb}{0.0, 0.28, 0.67}
\definecolor{darkpowderblue}{rgb}{0.0, 0.2, 0.6}
\definecolor{egyptianblue}{rgb}{0.06, 0.2, 0.65}
\definecolor{lapislazuli}{rgb}{0.15, 0.38, 0.61}
\definecolor{tealblue}{rgb}{0.21, 0.46, 0.53}
\definecolor{darktealblue}{rgb}{0.11, 0.36, 0.43}
\definecolor{zaffre}{rgb}{0.0, 0.08, 0.66}
\definecolor{zaffreprime}{rgb}{0.4, 0.18, 0.76}
\definecolor{arsenic}{rgb}{0.23, 0.27, 0.29}
\definecolor{bulgarianrose}{rgb}{0.28, 0.02, 0.03}
\definecolor{aurometalsaurus}{rgb}{0.43, 0.5, 0.5}
\definecolor{amethyst}{rgb}{0.6, 0.4, 0.8}
\definecolor{pblue}{rgb}{.114, .102, .365}
\definecolor{pblue2}{rgb}{.204, .196, .471}
\definecolor{pblue3}{rgb}{.655,.784,.855}
\definecolor{pgreen}{rgb}{.075,.212,.337}
\definecolor{pgreen3}{rgb}{.667,.882,.765}
\definecolor{codegreen}{rgb}{0,0.6,0}
\definecolor{codegray}{rgb}{0.5,0.5,0.5}
\definecolor{codepurple}{rgb}{0.58,0,0.82}
\definecolor{backcolour}{rgb}{0.95,0.95,0.92}
\@date \else {\vskip3ex \centering\footnotesize\@date\par\vskip1ex}\fi%
\else \@footnotetext{\@setdate}\fi}%
\def\section{\@startsection{section}{1}%
  \z@{.7\linespacing\@plus\linespacing}{.5\linespacing}%
  {\normalfont\Large\bfseries}}%{\normalfont\Large\bfseries\scshape\centering}} % full range of options tested
\def\subsection{\@startsection{subsection}{2}%
  \z@{.5\linespacing\@plus.7\linespacing}{.5\linespacing}%
  {\normalfont\large\bfseries}}%
\def\subsubsection{\@startsection{subsubsection}{3}%
  \z@{.5\linespacing\@plus.7\linespacing}{.5\linespacing}%
  {\normalfont\bfseries}}%
\def\@tocline#1#2#3#4#5#6#7{\relax%
  \ifnum #1>\c@tocdepth % then omit
  \else%
    \par \addpenalty\@secpenalty\addvspace{#2}%
    \begingroup \hyphenpenalty\@M%
    \@ifempty{#4}{%
      \@tempdima\csname r@tocindent\number#1\endcsname\relax%
    }{%
      \@tempdima#4\relax%
    }%
    \parindent\z@ \leftskip#3\relax \advance\leftskip\@tempdima\relax%
    \rightskip\@pnumwidth plus4em \parfillskip-\@pnumwidth%
    #5\leavevmode\hskip-\@tempdima%
      \ifcase #1%
       \or\or \hskip 1em \or \hskip 2em \else \hskip 3em \fi%
      #6\nobreak\relax%
    \dotfill\hbox to\@pnumwidth{\@tocpagenum{#7}}\par%
    \nobreak%
    \endgroup%
  \fi}%
\newenvironment{solution} {\begin{mdframed}[skipabove=0.1in,skipbelow=0.1in]\begin{proof}[Solution]} {\end{proof}\end{mdframed}}
\declaretheorem[numberwithin=section, name=Theorem]{theorem}
\declaretheorem[ name=Lemma, sibling=theorem]{lemma}
\declaretheorem[ name=Corollary, sibling=theorem]{corollary}
\declaretheorem[sibling=theorem, name=Proposition]{proposition}
\declaretheorem[numberwithin=section, style=definition, name=Definition]{defn}
\declaretheorem[,numberwithin=section, name=Remark,sibling=theorem,style=definition]{remark}
\declaretheorem[sibling=theorem,numberwithin=section, name=Remarks,style=definition]{remarks}
\declaretheorem[sibling={example}, name=Exercise]{exercise}
\declaretheorem[sibling={example}, name=Problem]{prob}
\declaretheorem[sibling={example}, name=Question]{quest}
\declaretheorem[numbered=no, name=Problem]{prob-nonum}
\lstdefinestyle{mystyle}{
    keywordstyle=\color[rgb]{0,0,1},
    commentstyle=\color[rgb]{0.026,0.412,0.595},
    stringstyle=\color[rgb]{0.627,0.126,0.941},
    numberstyle=\color[rgb]{0.205, 0.142, 0.73},
    frame=single,
    breakatwhitespace=false,
    breaklines=true,
    postbreak=\mbox{\textcolor{darkgray}{$\hookrightarrow$}\space},
    captionpos=b,
    keepspaces=true,
    numbers=left,
    numbersep=5pt,
    showspaces=false,
    showstringspaces=false,
    showtabs=false,
    tabsize=2,
    basicstyle=\ttfamily,
    columns=fullflexible,
    keepspaces=true,
    language=C++
}
\DeclareRobustCommand{\SkipTocEntry}[3]{}
\crefname{defn}{definition}{definitions}
\Crefname{defn}{Definition}{Definitions}
\crefname{theorem}{theorem}{theorems}
\Crefname{theorem}{Theorem}{Theorems}
\crefname{prob}{problem}{problems}
\Crefname{prob}{Problem}{Problems}
\crefname{exercise}{exercise}{exercises}
\Crefname{exercise}{Exercise}{Exercises}
\crefname{proposition}{proposition}{propositions}
\Crefname{proposition}{Proposition}{Propositions}
\newcommand{\leqnomode}{\tagsleft@true\let\veqno\@@leqno}
\newcommand{\ZZ}{\mathbb{Z}}
\newcommand{\CC}{\mathbb{C}}
\newcommand{\abs}[1]{\left\lvert #1 \right\rvert}
\newcommand{\0}{\emptyset}
\renewcommand*{\st}{\;\vert\;}
\DeclareMathOperator{\Id}{Id}
\DeclareMathOperator{\im}{im}
\DeclareMathOperator{\Hom}{Hom}
\newcommand{\rest}[2]{{% we make the whole thing an ordinary symbol
  \left.\kern-\nulldelimiterspace % automatically resize the bar with \right
  #1 % the function
  \vphantom{\big|} % pretend it's a little taller at normal size
  \right|_{#2} % this is the delimiter
  }}
\DeclareMathOperator{\SL}{SL}
\newcommand{\de}{\delta}
\DeclareMathOperator{\Mod}{Mod}
\DeclareMathOperator{\Stab}{Stab}
\DeclareMathOperator{\Diff}{Diff}
\newcommand*{\PP}{\mathbb{P}}
\newcommand{\Bl}{\mathcal{B}\ell}
\DeclareMathOperator{\Aut}{Aut}
\DeclareMathOperator{\Mon}{Mon}
\DeclareMathOperator{\Br}{Br}
\newcommand{\para}[1]{\paragraph{\color{Black}\textbf{#1}}}
\newcommand{\connsum}{\mathbin{\#}}
\newcommand{\fibsum}{\mathbin{\#_F}}
\newif\ifhideexers
\newcommand{\VARTitle}{How large is the braid monodromy of low-genus Lefschetz fibrations?} 
\newcommand{\VARAuthor}{Faye Jackson}
\title{\vspace*{-1.75cm}\VARTitle} % Change title as needed
\author{\vspace{-0.25cm}\VARAuthor}
\date{}
\definecolor{darkpastelpurple}{rgb}{0.59, 0.44, 0.84}
\definecolor{mauve}{rgb}{0.88, 0.69, 1.0}
\renewcommand{\theequation}{\arabic{section}.\arabic{equation}}
\begin{document}

\hypersetup{linkcolor=lapislazuli}

\begin{abstract} 
	Given a genus $g$ smooth Lefschetz fibration $\pi : M \to S^2$ with singular locus $\Delta \subseteq S^2$, we describe the subgroup $\Br(\pi)$ of the spherical braid group $\Mod(S^2,\Delta)$ consisting of braids admitting a lift to a fiber-preserving diffeomorphism of $M$. We develop general methods for showing that the index $[\Mod(S^2,\Delta) : \Br(\pi)]$ is infinite. As an application of our methods, we prove that $[\Mod(S^2,\Delta) : \Br(\pi)] = \infty$ when $g = 1$, when $\pi$ is expressible as a self-fiber sum when $g \geq 2$, or when $\pi$ is a holomorphic genus $g = 2$ Lefschetz fibration whose vanishing cycles are nonseparating. In the genus $g = 1$ case, we relate the subgroup $\Br(\pi)$ to the action of $\Mod(S^2,\Delta)$ on the $\SL_2$-character variety for $S^2 \setminus \Delta$ and provide an alternate proof of the first application via recent work of Lam--Landesman--Litt \cite{lam-lan-litt}.
\end{abstract}

\thispagestyle{empty}
\maketitle
\setcounter{tocdepth}{2}
\microtypesetup{protrusion=false}
%\tableofcontents
\microtypesetup{protrusion=true}
%\newpage

\lhead{\VARAuthor}
\chead{\VARTitle}
\rhead{}
\setstretch{1.2}

\section{Introduction}

%\subfile{old-intro.tex}

This paper relates smooth Lefschetz fibrations of 4-manifolds, the spherical braid group, and $\SL_2$-character varieties. Given a smooth Lefschetz fibration{\footnote{For us, we consider only Lefschetz fibration $\pi : M \to S^2$ with simple singularities, i.e., with at most one critical point in each fiber. We also work in the smooth category unless otherwise stated.}}
$\pi : M \to S^2$ whose generic fibers have genus $g$, let
\begin{align*}
	\Diff^+(\pi) &\coloneqq \{F \in \Diff^+(M) \st F \text{ takes fibers of } \pi \text{ to fibers of } \pi\}.
\end{align*}
The smooth mapping class group of $\pi$ is 
\begin{align*}
	\Mod(\pi) &\coloneqq \pi_0(\Diff^+(\pi)).
\end{align*}
Tracking where an element of $\Diff^+(\pi)$ takes the fibers gives a spherical braid{\footnotemark} monodromy representation
{\footnotetext{There are two distinct notions of the spherical braid group, one is $B_n(S^2) \coloneqq \pi_1(\operatorname{Conf}_n(S^2))$, the fundamental group of $n$-point configurations in $S^2$, the other is $\Mod(S^2, \{p_1,\ldots,p_n\})$, a marked mapping class group. These differ by an exact sequence $1 \to \ZZ/2\ZZ \to B_n(S^2) \to \Mod(S^2, \{p_1,\ldots,p_n\}) \to 1$ \cite[\S 9.1.4]{primer}. In this paper we restrict ourselves to the latter notion.}}
\begin{align*}
	\rho : \Mod(\pi) \to \Mod(S^2, \Delta_\pi),
\end{align*}
where $\Delta_\pi \subseteq S^2$ is the singular locus of $\pi$ and $\Mod(S^2,\Delta_\pi) = \pi_0(\Diff^+(S^2,\Delta))$. In this paper, we study the image
\begin{align}
	\Br(\pi) \coloneqq \im(\rho) < \Mod(S^2, \Delta_\pi), \label{defn:br-pi}
\end{align}
of the braid monodromy $\rho$. The subgroup $\Br(\pi) < \Mod(S^2, \Delta_\pi)$ consists of those braids that lift to a fiber-preserving diffeomorphism of $M$; we call these braids \textit{liftable braids} or \textit{liftable{\footnotemark} mapping classes}. Hence $\Br(\pi)$ associates to each Lefschetz fibration $\pi$ a subgroup of the spherical braid group. See \Cref{fig:two-braids} for an example of two braids whose marked points are the singular values of rational elliptic fibration $\pi : M \to S^2$. The top braid lifts while the bottom braid does not.
\footnotetext{In the context of covering maps $\widetilde{S} \to S$ between surfaces, the subgroup of $\Mod(S)$ consisting of liftable mapping classes--i.e., admitting a lift to $\widetilde{S}$--has been the subject of recent research. Much of this research focuses on the Putman--Wieland conjecture, which concerns the action of this subgroup on $H_1(\widetilde{S})$ \cite[Conjecture 1.2]{putman-wieland}.}

The main goal of this paper is to understand how large $\Br(\pi)$ is in comparison to the full spherical braid group $\Mod(S^2,\Delta)$. The coarsest such measure is the index of $\Br(\pi)$ inside $\Mod(S^2,\Delta)$. We give a general method for determining when the index $[\Mod(S^2,\Delta) : \Br(\pi)]$ is infinite which is applicable to a wide range of Lefschetz fibrations. Applications include \Cref{thm:inf-index}, \Cref{thm:holomorphic-genus-two}, and \Cref{thm:index-ell-disk-intro} (see below).

\begin{figure}
	\centering
	\subfloat[$r \in \Br(\pi)$]{
		\centering
		\hspace{0.5in}\includegraphics[scale=0.5]{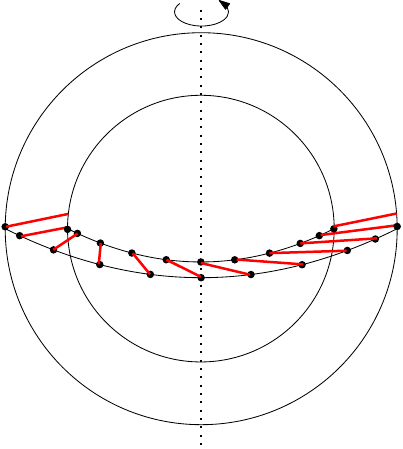} \hspace{0.5in}
		\includegraphics[width=0.22\textwidth]{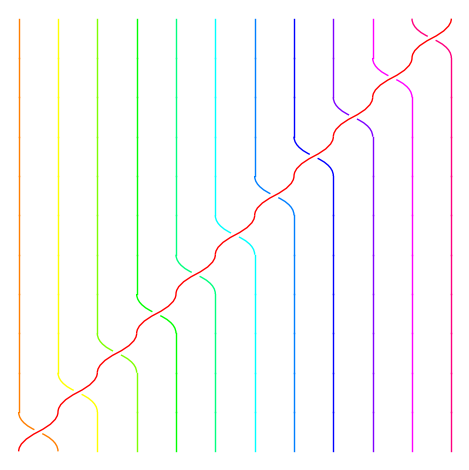}
	}\hspace{0.5in}
	\subfloat[$T_n \not\in \Br(\pi)$]{
		\centering
		\vspace{-0.1in}
		\includegraphics[scale=0.5]{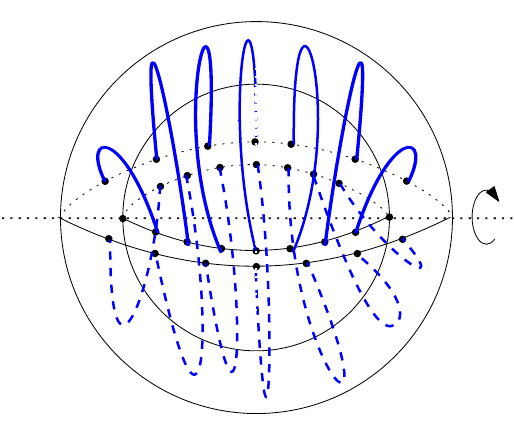} \hspace{0.6in}
		\includegraphics[width=0.15\textwidth]{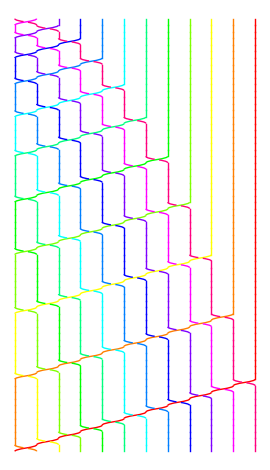}
	}
	\caption{The braids $r = \sigma_1\cdots \sigma_{n-1}$ given by rotation about the equator and the Garside half-twist $T_n = (\sigma_1\cdots \sigma_{n-1})(\sigma_1\cdots \sigma_{n-2})\cdots (\sigma_1\sigma_2)\sigma_1$ given by rotation about a longitude. $r$ lies in $\Br(\pi)$ while $T_n$ does not. In each case, the left figure depicts a spherical braid with its associated mapping class and the right figure depicts the corresponding element in the braid group $B_n$.}
	\label{fig:two-braids}
\end{figure}

\vspace{0.7\baselineskip}
\para{The Index of $\bm{\Br(\pi)}$} Our first result applies when $\pi$ is a self-fiber sum or when the fiber genus $g = 1$. For the definition of the fiber sum of two Lefschetz fibrations, we refer the reader to Gompf--Stipsicz \cite[\S 7.1]{gompf-stipsicz}
\begin{theorem}[Index of $\Br(\pi)$]\label{thm:inf-index}
	\noindent Let $\pi : M \to S^2$ be a nontrivial Lefschetz fibration of genus $g \geq 1$ with singular locus $\Delta_\pi \subseteq S^2$. Then
	\begin{enumerate}
		\item If $g = 1$ then $[\Mod(S^2, \Delta_\pi) : \Br(\pi)] = \infty$.
		\item If $\pi$ can be expressed as a self fiber-sum $\pi = p \fibsum p$ then $[\Mod(S^2, \Delta_\pi) : \Br(\pi)] = \infty$.
	\end{enumerate}
\end{theorem}
\vspace{-0.3cm}
\begin{remarks}\label{remark:inf-index}
	\hfill
	\begin{enumerate}[(i),leftmargin=0.75cm]
		\item  Part (1) of \Cref{thm:inf-index} can be deduced from Part (2). Explicitly, every genus $1$ Lefschetz fibration is expressed as a self-fiber sum of the rational elliptic surface given by the blowup $\Bl_{\{p_1,\ldots,p_9\}} \PP^2$ of $\PP^2$ along the intersection points of two cubics. The case of the rational elliptic surface itself can be addressed by a similar argument to that used in the proof of Part (2).\vspace{0.2cm}
		\item Part (1) \Cref{thm:inf-index}, the genus one case, can also be deduced from recent work of Lam, Landesman, and Litt classifying certain{\footnotemark} $\SL_2$-local systems on $(S^2, \Delta_\pi)$ with finite mapping class group orbit, see \Cref{sec:prelim} \cite{lam-lan-litt}. Their classification relies on non-abelian Hodge theory and is inspired by Katz's classification of rigid local systems. Our argument has the advantages of applying in genus $g \geq 2$ and being elementary; however, it relies heavily on our assumption that the singularities of our genus one fibrations are nodal. Lam--Landesman--Litt allows one to reproduce a variant of Part (1) for a wider variety of genus one fibrations allowing other types of singularities besides nodal cubics.
	\end{enumerate}
\end{remarks}
\footnotetext{Lam-Landesman-Litt's work only directly applies to those $\SL_2$-local systems where the monodromy about at least one puncture has infinite order. However, the local systems arising from genus one fibrations all have such infinite order monodromies.}

Using Chakiris' classification of holomorphic genus $2$ Lefschetz fibrations whose vanishing cycles are nonseparating \cite{chakiris}, we are also able to prove the following.
\begin{theorem}\label{thm:holomorphic-genus-two}
	Let $\pi : M \to S^2$ be a nontrivial holomorphic Lefschetz fibration with fibers of genus $g = 2$ and singular locus $\Delta$. If $\pi$ only has nonseparating vanishing cycles then 
	\[
		[\Mod(S^2,\Delta) : \Br(\pi)] = \infty.
	\]
\end{theorem}
As we will later see, given two Lefschetz fibrations $\pi$ and $\pi'$ such that $\Br(\pi)$ has infinite index in $\Mod(S^2,\Delta_\pi)$, the liftable braid group $\Br(\pi \fibsum \pi')$ of their fiber sum has infinite index in $\Mod(S^2,\Delta_{\pi \fibsum \pi'})$. Thus \Cref{thm:holomorphic-genus-two} is equivalent to proving that three explicit fibrations satisfy the infinite index property due to Chakiris' classification \cite{chakiris}.

\para{The Hurwitz Action} To prove \Cref{thm:inf-index,thm:holomorphic-genus-two}, we analyze the \textit{Hurwitz action} of $\Mod(S^2, \Delta_\pi)$ on the set
\begin{align*}
	\mathfrak{X}_g(S^2, \Delta_\pi) \coloneqq \Hom(\pi_1(S^2 \setminus \Delta_\pi), \Mod(\Sigma_g))/\Mod(\Sigma_g),
\end{align*}
where $\Mod(\Sigma_g)$ acts by conjugation and $\Mod(S^2,\Delta_\pi)$ acts by precomposition through its outer action on $\pi_1$. The orbit of a representation under this action is called its \textit{Hurwitz orbit}. In genus $g = 1$, there is a map from $\mathfrak{X}_1(S^2, \Delta_\pi)$ to the $\SL_2$-character variety:
\begin{align*}
	\mathfrak{X}_1(S^2,\Delta_\pi) \to \Hom(\pi_1(S^2 \setminus \Delta_\pi), \SL_2 \CC)\sslash \SL_2 \CC \eqqcolon \mathfrak{Y}(S^2,\Delta_\pi)(\CC),
\end{align*}
via the identification $\Mod(\Sigma_1) \cong \SL_2\ZZ$. For $\SL_2$-character varieties, the Hurwitz orbits are well-studied but not well-understood in general, with applications ranging from Diophantine equations via Markoff surfaces to algebraic solutions of ordinary differential equations. Of particular interest to the present paper are large orbits under the action on the mod $p$ points $\mathfrak{Y}(S^2,\Delta_\pi)(\mathbb{F}_p)$ as in \cite{markoff-triples-strong-approx} or finite orbits over $\CC$ as in \cite{lam-lan-litt}. Motivated by the relationship to classical character varieties in genus one, we call $\mathfrak{X}_g(S^2,\Delta_\pi)$ the \textit{modular character variety}. 

Given a Lefschetz fibration $\pi$, we consider its associated monodromy representation
\begin{align}
	\phi_\pi : \pi_1(S^2 \setminus \Delta_\pi, b) \to \Mod(\pi^{-1}(b)) \cong \Mod(\Sigma_g),\label{defn:mon-rep}
\end{align}
for $b \in S^2 \setminus \Delta_\pi$ and $\Sigma_g$ a genus $g$ connected oriented surface. Note that the monodromy representation $\phi_\pi$ is only well-defined up to conjugacy, due to the choice of identification of the fiber $\pi^{-1}(b)$ with $\Sigma_g$. Taking simple loops $\gamma_1,\ldots,\gamma_n$ about each puncture which whose product is nullhomotopic, we also obtain a factorization
\begin{align*}
	\phi_\pi(\gamma_1)\cdots \phi_\pi(\gamma_n) = \Id
\end{align*}
in the mapping class group. Picard--Lefschetz theory implies that $\phi_\pi(\gamma_i) \in \Mod(\Sigma_g)$ is a positive Dehn twist about some simple closed curve. The ordered $n$-tuple $(\phi_\pi(\gamma_i))$ is called the \textit{monodromy factorization of $\pi$}.

In this way, any Lefschetz fibration canonically determines a point $[\phi_\pi] \in \mathfrak{X}_g(S^2,\Delta_\pi)$. A liftable braid $[f] \in \Br(\pi) < \Mod(S^2,\Delta_\pi)$ must fix this point under the Hurwitz action because a lift $F \in \Diff^+(\pi)$ realizes a bundle isomorphism between $\pi$ and $f \circ \pi$ over $S^2 \setminus \Delta_\pi$. In general, the monodromy group is expected to be as large as possible, limited only by obvious geometric constraints. This slogan is referred to as ``big monodromy.'' Confirming this philosophy in our particular case, we realize $\Br(\pi) < \Mod(S^2,\Delta_\pi)$ as the stabilizer
\begin{equation}
	\Br(\pi) = \Stab_{\Mod(S^2,\Delta_\pi)} [\phi_\pi]\label{eq:br-pi-stab}
\end{equation}
in \Cref{sec:prelim}, with respect to the Hurwitz action. The identification \eqref{eq:br-pi-stab} can be inferred from work of Moishezon in genus $g = 1$ and Matsumuto in genus $g \geq 2$ (see \Cref{thm:moishezon-matsumuto} below) \cite{moishezon,matsumuto}. Their work also shows that two Lefschetz fibrations $\pi,\pi'$ are isomorphic if and only if $[\phi_\pi]$ and $[\phi_{\pi'}]$ lie in the same Hurwitz orbit. In this case $\Br(\pi)$ is conjugate to $\Br(\pi')$. 

%\Cref{thm:inf-index} is proven in \Cref{sec:index} by showing that the Hurwitz orbit of $[\phi_\pi]$ is infinite. The proof uses Auroux's lemma, which is familiar to experts in the theory of Lefschetz fibrations. In \Cref{subsec:inf-index-general} we develop general criteria for proving similar results over arbitrary base surfaces $B$ of a Lefschetz fibration $\pi : M \to B$. In this case, $\Mod(B, \Delta_\pi)$ is not a spherical braid group, and so we use a distinct notation for the associated subgroup
%\begin{align}
%	\Mod_\pi \coloneqq \im(\Mod(\pi) \to \Mod(B, \Delta_\pi)) \label{defn:mod-pi-intro}.
%\end{align}
%%Using these general criteria, we are able to show that for a wide range of Lefschetz fibrations $\pi : M \to B$, we have
%%\begin{align*}
%%[\Mod(B \# B \# B \# B, \Delta_{\pi \fibsum \pi \fibsum \pi \fibsum \pi}) : \Mod_{\pi \fibsum \pi \fibsum \pi \fibsum \pi}] = \infty.
%%\end{align*}
%We delay the precise statement of these general criteria until \Cref{subsec:inf-index-general} where we develop the appropriate language; for those interested see \Cref{defn:spider} and the subsequent lemmas. We prove \Cref{thm:holomorphic-genus-two} using by applying these general criteria to three explicit Lefschetz fibrations in genus $g \geq 2$. In genus $2$, Chakiris' classification shows that these three explicit examples are sufficient to imply \Cref{thm:holomorphic-genus-two}.

\vspace{0.5\baselineskip}

\para{Finite Hurwitz Orbits over the Disk} We now describe our results on genus one Lefschetz fibrations $\pi : M \to D^2$ over the disk. The subgroup $\Br(\pi)$ is defined analogously to \eqref{defn:br-pi} above (see \Cref{sec:prelim} for details). In this case, $\Br(\pi)$ is realized as a subgroup of the classical braid group $\Mod(D^2, \Delta_\pi) \cong B_n$, where $n = \abs{\Delta_\pi}$ is the number of singular fibers. 

Let $q_n : M_n \to D^2$ be the genus one Lefschetz fibration with $n$ singular fibers and monodromy representation
\begin{align*}
	\phi_{q_n} : \pi_1(D^2 \setminus \text{ $n$ points}) &\to \Mod(\Sigma_1) \\
	\phi_{q_n}(\gamma_{2i+1}) = T_\alpha, &\quad \phi_{q_n}(\gamma_{2i}) = T_\beta, \addtocounter{equation}{1}\tag{\theequation}\label{eq:phiqn}
\end{align*}
where $\gamma_1,\ldots,\gamma_n$ are a standard system of generators for $\pi_1$ and $\alpha,\beta$ are simple closed curves in $\Sigma_1$ with geometric intersection number $i(\alpha,\beta) = 1$. Using the general criteria developed in \Cref{subsec:inf-index-general}, we prove the following theorem.
\begin{theorem}\label{thm:index-ell-disk-intro}
	Let $q_n : M_n \to D^2$ be the genus one Lefschetz fibration with monodromy $\phi_{q_n}$ given in \eqref{eq:phiqn}. If $n \geq 5$, then $[B_n : \Br(q_n)] = \infty$. The indices for $n \leq 4$ are
	\begin{align*}
		[B_2 : \Br(q_2)] = 3 && [B_3 : \Br(q_3)] = 8 && [B_4 : \Br(q_4)] = 27.
	\end{align*}
\end{theorem}
The Hurwitz orbits of the monodromy representation $\phi_{q_n}$ for $n = 3,4$ are displayed in \Cref{fig:braid-orbits-intro}, and the orbit for $n = 2$ is merely a triangle. The orbits of $\phi_{q_3}$ and $\phi_{q_4}$ are computed using a program developed by the author in SAGE/Python (see \eqref{eq:code}). The implemented algorithm is able to compute the Hurwitz orbit whenever it is finite and the fiber of the Lefschetz fibration $\pi : M \to D^2$ is a genus $g$ surface with one marked point. The input to the program is the monodromy representation of $\pi$. 

\begin{figure}[ht]
	\centering
	\subfloat[$n=4$]{
		\centering
		\includegraphics[scale=0.38]{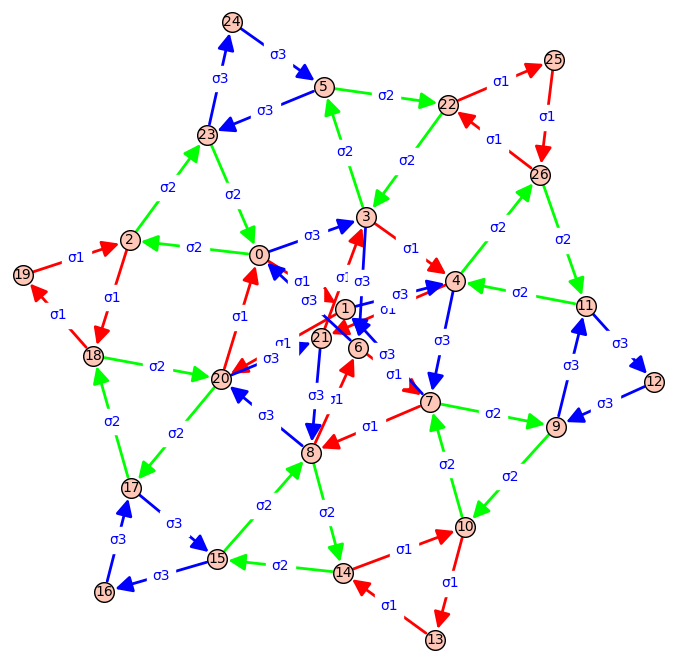}
	}\\ \vspace{-0.8in}
	\subfloat[Center of $n=4$]{
		\centering
		\includegraphics[scale=0.3]{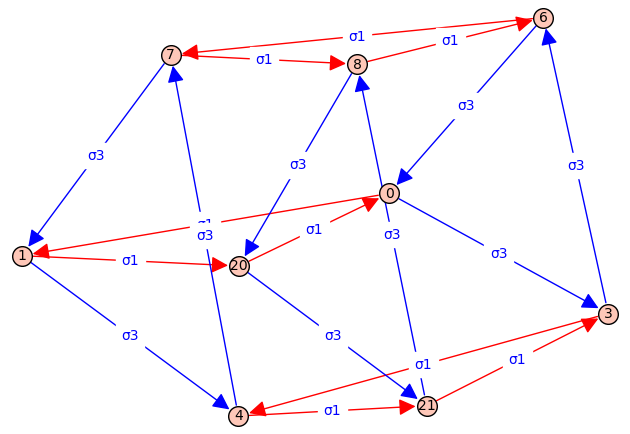}
	}\hspace{1in}
	\subfloat[$n=3$]{
		\centering
		\includegraphics[scale=0.45]{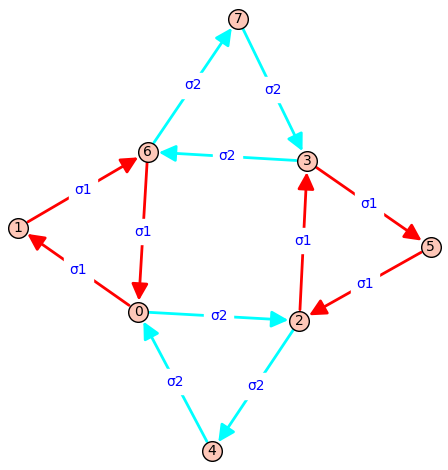}
	}
	\hspace{0.1in}

  	%\captionsetup{width=0.85\linewidth}
	\caption{Hurwitz orbits of $\phi_{q_n}$ for $n = 3$ and $4$. Vertex $0$ is $\phi_{q_n}$, and missing edges indicate representations fixed by the corresponding $\sigma_i$. When $n=3$ the orbit has size $8$, and when $n = 4$ it has size $27$. Figure (B) is the center portion of Figure (A).}
	\label{fig:braid-orbits-intro}
\end{figure}

For $\Br(q_3) < B_3$ and $\Br(q_4) < B_4$, we use GAP to compute generating sets for these subgroups. Call an arc between two singular fibers \textit{equinodal} if the vanishing cycles (in the sense of Picard--Lefschetz theory) for the singular fibers about these arcs are equal. The arc is called \textit{antinodal} if the vanishing cycles have geometric intersection number one. A direct computation shows that the half-twist about an equinodal arc lies in $\Br(\pi)$ and the third power of the half-twist about an antinodal arc lies in $\Br(\pi)$, for any genus $g$ fibration $\pi$. Our computation proves the following proposition.
\begin{proposition}\label{prop:fin-index-gen-set-intro}
	$\Br(q_3)$ and $\Br(q_4)$ are generated by half-twists about finitely many equinodal arcs along with the cubes of half-twists about finitely many antinodal arcs.
\end{proposition}

\vspace{0.5\baselineskip}

\para{Relation to Previous Work} For algebraic genus one fibrations $\pi : X \to \PP^1$, L\"{o}nne defines the \textit{bifurcation braid monodromy} of an algebraic family $\mathcal{X} \to \mathcal{B}$ of Lefschetz fibrations over $\PP^1$ containing $\pi$. Formally, such a family is equipped with a map $f : \mathcal{X} \to \mathcal{P}$, where $\mathcal{P}$ is a $\PP^1$-bundle over $\mathcal{B}$ so that the restriction $f_b : X_b \to \mathcal{P}_b$ is a genus one Lefschetz fibration and the restriction to some $b_0 \in \mathcal{B}$ is $\pi$ itself. In this setting, the \textit{bifurcation braid monodromy} of $\mathcal{X} \to \mathcal{B}$ is the composition
\begin{align*}
	\pi_1(\mathcal{B}) \to \Mod(\pi) \to \Br(\pi).
\end{align*}
L\"{o}nne studies in detail the group $\Br^{alg}(\pi)$ generated by the images of all such monodromies for all families $\mathcal{X} \to \mathcal{B}$ containing $\pi$ \cite{lonne-braid-monodromy}. L\"{o}nne shows that, when $\pi$ is a genus one fibration, there is some $b \in \Mod(S^2,\Delta)$ so that
\begin{align*}
	\Br^{alg}(\pi) = b\langle \sigma_{ij}^{m_{ij}} \mid m_{ij} = 1 \text{ if } i \equiv j \pmod 2 \text{ or } m_{ij} = 3 \text{ if } i \not\equiv j \pmod 2 \rangle b^{-1}
\end{align*}
where $\sigma_{ij}$ is a simple braid exchanging punctures $i,j$ on the sphere \cite[Main Theorem]{lonne-braid-monodromy}. L\"{o}nne's generating set for $\Br^{alg}(\pi)$ when $\pi$ is a genus one fibration agrees with the generating sets given in \Cref{prop:fin-index-gen-set-intro} for finite-index genus one fibrations over the disk. L\"{o}nne leaves open the question of whether $\Br^{alg}(\pi) = \Br(\pi)$ for genus one fibrations with nodal fibers. However, he does show that $\Br^{alg}(\pi) = \Br(\pi)$ for genus one fibrations with six nodal fibers and an odd number of singular fibers of type $I_0^\ast$.

Ito has previously examined the Hurwtiz orbits of representations from the free group $F_n$ to the braid group $B_3$ \cite{ito-fin-braid-hurwitz}. In particular, he provides criteria for these orbits to be finite, and upon applying the morphism $B_3 \to \SL_2\ZZ$, his examples induce genus one Lefschetz fibrations over the disk with finite Hurwitz orbit. E.g., the Hurwitz orbit of $q_3$, displayed here as \Cref{fig:braid-orbits-intro}, can be found as \cite[Fig. 6]{ito-fin-braid-hurwitz}.

\vspace{0.5\baselineskip}
\para{Further Questions} We hope this paper encourages further research into $\Br(\pi)$. To do so, we have compiled a list of questions. Given a Lefschetz fibration $\pi : M \to S^2$, we ask: 
\vspace{0.3cm}

\noindent \textbf{Question 1:} When is $\Br(\pi)$ finitely generated/finitely presented?

%\noindent \textbf{Question 2:} Supposing $\pi$ has genus one fibers, does the exact sequence
%\begin{align*}
%	1 \to \MW^{\pm}(\pi) \to \Mod(\pi) \to \Br(\pi) \to 1
%\end{align*}
%given at \eqref{eq:exact-mw} split?

\noindent \textbf{Question 2:} Do there exist Lefschetz fibrations of genus $g$ over the sphere so that $\Br(\pi)$ is finite index?
\noindent \textbf{Question 3:} Can one classify the torsion elements of $\Br(\pi)$?
\vspace{0.3cm}

\noindent Resolving Question 1 in the affirmative for genus one Lefschetz fibrations would imply that $\Mod(\pi)$ is finitely generated (resp. finitely presented), as Farb--Looijenga have previously shown that in genus one $\ker(\Mod(\pi) \to \Mod(S^2,\Delta))$ is free of rank $\abs{\Delta}-4$ \cite[Theorem 1.2]{fl-smooth-mw}. Question 3 has prospective applications to the Nielsen Realization Problem. The author's previous paper addresses Question 3 in the case of elliptic fibrations and where the torsion elements in $\Br(\pi)$ have orders $\abs{\Delta}, \abs{\Delta} - 1,$ and $\abs{\Delta}-2$ \cite{braid-torsion-j}.

\vspace{0.5\baselineskip}
\para{Organization of the Paper} \Cref{sec:prelim} discusses the relationship between $\Br(\pi)$, i.e., the liftable mapping classes, and the monodromy representation of a Lefschetz fibration. We also give a proof of Part (1) of \Cref{thm:inf-index} based on the connection to Lam-Landesman-Litt's work and $\SL_2$-character varieties for genus one fibrations. We begin \Cref{sec:index} by proving \Cref{thm:inf-index} which correspond to the $g = 1$ and self-fiber sum cases. We then explore the exceptional finite-example examples over the disk in \Cref{subsec:finite-index}, proving \Cref{thm:index-ell-disk-intro} when $n \leq 4$ and \Cref{prop:fin-index-gen-set-intro}. In \Cref{subsec:inf-index-general}, we develop general criteria for showing $\Mod_\pi = \im(\Mod(\pi) \to \Mod(B,\Delta))$ has infinite index in $\Mod(B,\Delta)$ for Lefschetz fibrations over arbitrary base and prove \Cref{thm:index-ell-disk-intro} when $n \geq 5$ Using these general criteria, we prove \Cref{thm:holomorphic-genus-two} in \Cref{sec:holomorphic}, showing that holomorphic genus $g = 2$ Lefschetz fibrations with nonseparating vanishing cycles satisfy $[\Mod(S^2,\Delta) : \Br(\pi)] = \infty$.

\vspace{0.5\baselineskip}
\para{Code/Data Availability}\label{code:data}

A program in Sage/Python was implemented to do the necessary computations in \Cref{subsec:finite-index}. This program can be obtained from GitHub at
\begin{align*}
	\text{\href{https://github.com/FayeAlephNil/solomon}{https://github.com/FayeAlephNil/solomon}} \tag{$\dagger\dagger$}\label{eq:code}
\end{align*}
or, upon request, from the author.

\vspace{0.5\baselineskip}
\para{Acknowledgements}

I thank my advisor Benson Farb for his generosity with his time, feedback, and support. Jointly, I thank Benson Farb and Eduard Looijenga for spending time explaining their work to me. I also thank Aaron Landesman and Daniel Litt for helpful conversations concerning their previous work on character varieties. I appreciate Seraphina Lee and Carlos Servan explaining their work on homologically distinct Lefschetz fibrations, which allowed me to simplify the proof of \Cref{thm:inf-index}. Thank you to Nick Salter and Dan Margalit, whose comments significantly improved the introduction. Finally, I thank the members of the geometry and topology working group for their time and interest in my project, particularly Ethan Pesikoff and Zhong Zhang. The author was partially supported by an NSF Graduate Research Fellowship under NSF grant no. 2140001.

\section{The Hurwitz Action and \texorpdfstring{$\Br(\pi)$}{Br(pi)}}\label{sec:prelim}

Let $\pi : M \to S$ be a genus $g$ Lefschetz fibration\footnote{For a survey of the theory of Lefschetz fibrations we refer the reader to \cite{endo}.} over a connected oriented surface $S$ with at most one boundary component and with no singular values along the boundary $\partial S$. We assume throughout that each singular fiber of $\pi$ has one critical point i.e., that the singularities are simple. Let $\Delta_\pi \subseteq S \setminus \partial S$ denote the set of singular values of $\pi$. For convenience, when the fibration is clear from context, we will omit $\pi$ from the notation and simply write $\Delta$.  As in \cite{fl-smooth-mw} we let,
\begin{align}
	\Diff^+(\pi) &\coloneqq \{F \in \Diff^+(M) \st \rest{F}{\pi^{-1}(\partial S)} = \Id, F \text{ sends fibers of } \pi \text{ to fibers of } \pi\} \\
	\Mod(\pi) &\coloneqq \pi_0\Diff^+(\pi). \label{defn:mod-pi}
\end{align}
There is a natural map $\Mod(\pi) \to \Mod(S, \Delta)$ given by tracking where $F$ takes each fiber, notably, singular fibers must be sent to singular fibers. The image of $\Mod(\pi) \to \Mod(S, \Delta)$ consists precisely of those mapping classes which lift to fiber-preserving diffeomorphisms of $M$ acting as the identity on $\pi^{-1}(\partial S)$. We call such mapping classes \textit{liftable mapping classes}, and denote the subgroup of liftable mapping classes by
\begin{align}
	\Mod_\pi \coloneqq \im(\Mod(\pi) \to \Mod(S, \Delta)) \label{defn:mod-sub-pi}
\end{align}
When $S = S^2$ or $S = D^2$, we adopt the notation of the introduction
\begin{align}
	\Br(\pi) \coloneqq \Mod_\pi = \im(\Mod(\pi) \to \Mod(S^2, \Delta)) < \Mod(S^2,\Delta) \label{defn:braid-pi} \\
	\Br(\pi) \coloneqq \Mod_\pi = \im(\Mod(\pi) \to \Mod(D^2, \Delta)) < B_n,
\end{align}
where $n = \abs{\Delta}$ is the number of singular fibers. In this way we think of $\Br(\pi)$ as a subgroup of the (spherical) braid group which is associated to $\pi$.

We now give a description of the liftable mapping classes in terms of the monodromy of the Lefschetz fibration, essentially due to Moishezon in genus $g = 1$ and Matsumuto in genus $g \geq 2$. To do so, when $\partial S \neq \0$, let $b \in \partial S$ (resp. $b \in S\setminus \Delta$ if $\partial S = \0$), then let
\begin{align}
	\phi_\pi : \pi_1(S \setminus \Delta, b) \to \Mod(\pi^{-1}(b)) \cong \Mod(\Sigma_g) \label{defn:mon-rep-prelim}
\end{align}
be the monodromy representation of $\pi$, choosing some identification $\pi^{-1}(b)$ with the standard $\Sigma_g$ once and for all. For later use, recall that given an arc $\ell$ from $b$ to the image $p \in \Delta$ of some singular fiber there is an associated \textit{vanishing cycle} $\alpha \subseteq \pi^{-1}(b)$, which can be represented by a simple closed curve. Furthermore, if $\gamma$ is obtained by adjoining a small counterclockwise loop about $p$ to $\ell$, then $\phi_\pi(\gamma) = T_\alpha$, the Dehn twist about $\alpha$ (see \cite[Section 2.2]{endo}). Hence, over $S = S^2$, the monodromy representation takes the form of a factorization of the identity
\begin{align*}
	T_{\alpha_1}\cdots T_{\alpha_n} = \Id
\end{align*}
in the mapping class group $\Mod(\Sigma_g)$. The tuple $(T_{\alpha_1},\ldots,T_{\alpha_n})$ is called the \textit{monodromy factorization} associated to $\pi$.

When $\partial S \neq \0$, we find that $\Mod(S, \Delta)$ acts on $\pi_1(S \setminus \Delta, b)$, and hence on the set of all such representations by precomposition. When $\partial S = \0$, the action of $\Mod(S,\Delta)$ on $\pi_1$ is merely an outer action, and so it acts on the set of all such representations up to conjugacy. For $S = S^2$, the action by precomposition is the familiar \textit{Hurwitz action} from the theory of Lefschetz fibrations. Equipped with this notation, we state the theorems of Moishezon/Matsumuto.
\begin{theorem}[Moishezon {\cite{moishezon}}, Matsumuto {\cite{matsumuto}}]\label{thm:moishezon-matsumuto}
	Let $\pi : M \to S$ be a Lefschetz fibration of genus $g$ over a connected oriented base $S$, with singular locus $\Delta \subseteq S$. Suppose that $\partial S$ is nonempty (resp. $\partial S = \0$) and has a single component, then $\Mod_{\pi}$ is the stabilizer of $\phi_\pi$ (resp. up to conjugation) under the action of $\Mod(S,\Delta)$.
\end{theorem}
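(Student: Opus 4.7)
The plan is to establish $\Mod_\pi = \Stab_{\Mod(S,\Delta)}(\phi_\pi)$ (with the stabilizer taken up to conjugacy when $\partial S = \emptyset$) by proving both containments. The forward containment is essentially formal from the definitions of monodromy and liftability, while the reverse containment is the substantive part and rests on a classification of Lefschetz fibrations by their monodromy.

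For the forward containment, I would take $f \in \Mod_\pi$ with a lift $F \in \Diff^+(\pi)$. Since $F$ sends fibers to fibers covering $f$, parallel transport along a loop $\gamma \in \pi_1(S \setminus \Delta, b)$ is intertwined by $F|_{\pi^{-1}(b)}$ with parallel transport along $f_*\gamma$. When $\partial S \neq \emptyset$ with $b \in \partial S$, the constraint $F|_{\pi^{-1}(\partial S)} = \Id$ forces $F|_{\pi^{-1}(b)} = \Id$, so $\phi_\pi(f_*\gamma) = \phi_\pi(\gamma)$ in $\Mod(\Sigma_g)$ on the nose. When $\partial S = \emptyset$, one must reconnect the fibers $\pi^{-1}(b)$ and $\pi^{-1}(f(b))$ by an arc to land in $\Mod(\Sigma_g)$, and this reconnection produces exactly the conjugation ambiguity in the statement.

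For the reverse containment, given $f \in \Stab(\phi_\pi)$, I would choose a representative $f' \in \Diff^+(S,\Delta)$ (fixing $\partial S$ pointwise when relevant) and form the pullback Lefschetz fibration $f'^*\pi : f'^*M \to S$, whose singular locus is again $\Delta$. Unwinding the definitions of monodromy and pullback, the monodromy representation of $f'^*\pi$ at $b$ is precisely $\phi_\pi \circ f_*$, which by hypothesis equals $\phi_\pi$ (up to conjugacy). Invoking the classification of Lefschetz fibrations by their monodromy, there exists a fiber-preserving diffeomorphism $\Phi : M \to f'^*M$ covering $\Id_S$ and restricting to the identity over $\partial S$. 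Composing $\Phi$ with the tautological fiber-preserving map $f'^*M \to M$ covering $f'$ yields a representative $F \in \Diff^+(\pi)$ of the desired lift.

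The main obstacle is the classification theorem invoked above, which is the technical heart of Moishezon's and Matsumuto's work. The classification splits into two parts: the smooth classification of the underlying $\Sigma_g$-bundle over $S \setminus \Delta$ by its holonomy, which is classical bundle theory; and the uniqueness of the extension across each singular fiber, which requires identifying a neighborhood of each singular fiber with the standard Lefschetz normal form $(z_1, z_2) \mapsto z_1^2 + z_2^2$ and showing that fiber-preserving self-diffeomorphisms of this model rel the generic boundary fiber are contractible modulo the Dehn twist about the vanishing cycle. The case $g = 1$ requires extra care due to the abundance of sections and was handled by Moishezon, while Matsumuto handled $g \geq 2$ using the fact that for higher genus the mapping class group has trivial center, which rigidifies the fiber identifications.
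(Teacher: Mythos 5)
Your proposal is correct and matches the paper's approach: the paper does not give a self-contained proof but explicitly notes that this ``maps version'' follows from the classification results of Moishezon and Matsumuto, and your argument (formal forward containment, then pullback along $f'$ plus the classification-by-monodromy theorem for the reverse containment) is precisely the standard way that deduction goes. One small inaccuracy in your closing aside: $\Mod(\Sigma_2)$ has nontrivial center (generated by the hyperelliptic involution), so ``trivial center for higher genus'' only holds for $g \geq 3$; this does not affect your argument since you invoke the classification as a black box.
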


\begin{remark}
	Originally, Moishezon and Matsumuto's theorems are phrased as a classification result, i.e., that a Lefschetz fibration of genus $g$ is classified by its monodromy factorization (equivalently, its monodromy representation). The above statement is a maps version of their results, which follows immediately from their proofs. See \cite[Theorem 3.3]{endo} for a similar statement in the literature.
\end{remark}

Using \Cref{thm:moishezon-matsumuto}, Moishezon was able to classify Lefschetz fibrations in genus 1 by classifying homomorphisms $\pi_1(S^2 \setminus \Delta) \to \Mod(\Sigma_1) \cong \SL_2\ZZ$ which take simple loops about the punctures to Dehn twists. 
\begin{theorem}[Moishezon {\cite[Theorem 9]{moishezon}}]\label{thm:moishezon}
	Any nontrivial Lefschetz fibration $\pi : M \to S^2$ of genus one has $12d$ singular fibers for some integer $d \geq 1$. Furthermore, the number of singular fibers determines $\pi$ in the following sense: given genus one fibrations $\pi, \pi' : M,M' \to S^2$ with the same number of singular fibers, there are diffeomorphisms $F,f$ making the following diagram commute
	\begin{center}
		\begin{tikzcd}
			M \ar[d] \ar[r,"F"] & M' \ar[d] \\
			(S^2,\Delta_\pi) \ar[r,"f"] & (S^2, \Delta_{\pi'}).
		\end{tikzcd}
	\end{center}
\end{theorem}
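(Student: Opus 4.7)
The plan is to apply \Cref{thm:moishezon-matsumuto} to reduce the classification to a purely group-theoretic problem about factorizations of the identity in $\Mod(\Sigma_1) \cong \SL_2\ZZ$ by Dehn twists, and then to analyze these factorizations using the abelianization of $\SL_2\ZZ$ together with the Hurwitz action.

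For the divisibility claim, a genus one Lefschetz fibration $\pi : M \to S^2$ with $n$ singular fibers corresponds by \Cref{thm:moishezon-matsumuto} to a tuple of Dehn twists
\[ (T_{\alpha_1}, T_{\alpha_2}, \ldots, T_{\alpha_n}) \quad \text{with } T_{\alpha_1} T_{\alpha_2} \cdots T_{\alpha_n} = \Id \]
in $\SL_2\ZZ$, considered up to the Hurwitz action and global $\SL_2\ZZ$-conjugation. Every Dehn twist is conjugate in $\SL_2\ZZ$ to the parabolic $\bigl(\begin{smallmatrix} 1 & 1 \\ 0 & 1 \end{smallmatrix}\bigr)$, and the abelianization map $\SL_2\ZZ \twoheadrightarrow \ZZ/12\ZZ$ sends every such Dehn twist to the same generator of $\ZZ/12\ZZ$. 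Hence the identity $T_{\alpha_1} \cdots T_{\alpha_n} = \Id$ forces $n \equiv 0 \pmod{12}$; nontriviality of $\pi$ excludes $n = 0$, giving $n = 12d$ for some $d \geq 1$.

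For the uniqueness claim, I would show that any two length-$12d$ factorizations of $\Id$ by Dehn twists lie in a common orbit under the Hurwitz action and global $\SL_2\ZZ$-conjugation; the desired diffeomorphisms $F$ and $f$ then follow from \Cref{thm:moishezon-matsumuto}. My approach is to identify $H_1(\Sigma_1; \ZZ) \cong \ZZ^2$ so that a Dehn twist $T_\alpha$ is specified by the primitive class $[\alpha] \in \ZZ^2$ up to sign, and to exploit the braid relation $T_a T_b T_a = T_b T_a T_b$ together with $(T_a T_b)^6 = \Id$ in $\SL_2\ZZ$ for any transverse pair $a,b$ (i.e.\ $|\langle [a], [b] \rangle| = 1$). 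I would then induct on $d$: bring a transverse pair of twists to the front via Hurwitz moves, use $(T_a T_b)^6 = \Id$ to split off a length-$12$ factor agreeing with a fixed model factorization (e.g.\ the one coming from the elliptic surface $E(1) \to S^2$), and apply the induction hypothesis to the remaining length-$12(d-1)$ factorization.

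The main obstacle is this inductive step: showing that in any factorization of the identity by Dehn twists one can always Hurwitz-manipulate adjacent vanishing cycles into a transverse pair, since a priori such a factorization might contain only pairs of vanishing cycles with large geometric intersection. I expect to need the explicit description of a Hurwitz move on an adjacent pair $(T_\alpha, T_\beta)$, which acts on their primitive vectors in $\ZZ^2$ essentially by the $\SL_2\ZZ$-action on itself, together with a descent argument bounding the sum of pairwise intersection numbers along consecutive vanishing cycles. The reduction is in spirit a Euclidean-algorithm argument in $\SL_2\ZZ$, and is the combinatorial core of Moishezon's original argument in \cite{moishezon}, which I would either invoke or reprove directly.
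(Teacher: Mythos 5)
This theorem is stated in the paper as a cited result of Moishezon (\cite[Theorem 9]{moishezon}) and is not proved there, so there is no in-paper argument to compare against; I can only assess your sketch on its own terms. Your first half is complete and correct: since all Dehn twists on the torus about essential curves are conjugate in $\Mod(\Sigma_1)\cong\SL_2\ZZ$ to the standard parabolic, which generates $(\SL_2\ZZ)^{ab}\cong\ZZ/12\ZZ$, any length-$n$ factorization of the identity by Dehn twists forces $12\mid n$. This is exactly the argument the paper itself invokes in passing at the end of Section 2 (``since $(\SL_2\ZZ)^{ab}\cong\ZZ/12\ZZ$''). One small caveat: this requires that no vanishing cycle be null-homotopic (i.e.\ relative minimality), since a null-homotopic vanishing cycle contributes trivially to $\Mod(T^2)$; this is implicit in the standard conventions but worth stating.

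For the uniqueness half, your reduction is structured correctly --- you need the \emph{classification} form of \Cref{thm:moishezon-matsumuto} (two fibrations are weakly isomorphic iff their monodromy factorizations agree up to Hurwitz moves and global conjugation), which the paper only states in its ``stabilizer'' form but notes in the following remark is the original formulation --- and the inductive strategy of splitting off copies of $(T_aT_b)^6$ is the right shape. However, the step you flag as the ``main obstacle'' is precisely the entire mathematical content of the uniqueness statement: the assertion that every length-$12d$ factorization of $\Id$ in $\SL_2\ZZ$ by conjugates of the standard parabolic is Hurwitz-equivalent to the model $(T_\alpha T_\beta)^{6d}$ is a genuinely hard combinatorial result (it is the subject of Livne's appendix to Moishezon's book, not a routine Euclidean descent). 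Saying you would ``either invoke or reprove'' it means the proposal, as written, does not prove the theorem; it reduces it to the same citation the paper already makes. That is a reasonable thing to do --- it puts you on equal footing with the paper --- but you should be explicit that the uniqueness claim is being cited rather than established.
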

In general, following \cite[Definition 3.1]{endo}, we call two Lefschetz fibrations $\pi,\pi' : M,M' \to S$ of genus $g$ \textit{weakly isomorphic} provided there exists such a pair $(F,f)$ of diffeomorphisms so that
\begin{center}
	\begin{tikzcd}
		M \ar[d] \ar[r,"F"] & M' \ar[d] \\
		(S,\Delta_\pi) \ar[r,"f"] & (S, \Delta_{\pi'})
	\end{tikzcd}
\end{center}
commutes. Changing $\pi$ by a weak isomorphism changes $\Mod_\pi$ by conjugation in $\Mod(S,\Delta_\pi)$. Furthermore $\Mod(\pi)$ consists of the isotopy classes of weak isomorphisms from $\pi$ to itself.

Equipped with \Cref{thm:moishezon-matsumuto,thm:moishezon}, we now deduce \Cref{thm:inf-index} from Lam-Landesman-Litt's work on classifying finite mapping class group orbits of rank 2 local systems. Because the action of $\Mod(\Sigma_1)$ on the homology $H_1(\Sigma_1, \ZZ)$ provides an isomorphism $\Mod(\Sigma_1) \xrightarrow{\cong} \SL_2(\ZZ)$, we find that for $\pi$ a genus one Lefschetz fibration over the sphere with singular locus $\Delta$, $\Br(\pi) = \Mod_\pi$ is the stabilizer of the conjugacy class of the monodromy representation $\phi_\pi : \pi_1(S^2 \setminus \Delta) \to \SL_2\ZZ$ under the Hurwitz action. Let
\begin{align*}
	Y_{\Delta}(\ZZ) &\coloneqq \Hom(\pi_1(S^2 \setminus \Delta), \SL_2\ZZ)/\SL_2\ZZ, \\
	Y_{\Delta}(\CC) &\coloneqq \Hom(\pi_1(S^2 \setminus \Delta), \SL_2\CC)/\SL_2\CC,
\end{align*}
where $\SL_2\ZZ$ and $\SL_2\CC$ act by conjugation. $Y_\Delta(\CC)$ parameterizes rank two local systems on $S^2 \setminus \Delta$ with trivial determinant bundle, and the GIT quotient
\begin{align*}
	\Hom(\pi_1(S^2 \setminus \Delta), \SL_2\CC)\sslash \SL_2\CC
\end{align*}
is the character variety for such local systems. By \Cref{thm:moishezon-matsumuto}, $\Br(\pi)$ has infinite index if and only if the orbit of $[\phi_\pi] \in Y_\Delta(\ZZ)$ under $\Mod(S^2, \Delta)$ is infinite. Because the natural map $Y_\Delta(\ZZ) \to Y_\Delta(\CC)$ is $\Mod(S^2, \Delta)$-equivariant, it follows immediately that $\Br(\pi)$ has infinite index inside $\Mod(S^2, \Delta)$ provided that the $\CC$-linear local system has infinite mapping class group orbit. Because the monodromy about a loop $\gamma$ surrounding a single puncture is given by the Dehn twist about the vanishing cycle associated to $\gamma$, which has infinite order, we can apply \cite[Corollary 1.1.8]{lam-lan-litt}, which classifies such rank two local systems of finite mapping class group orbit. In particular, all rank two local systems over the sphere with infinite order monodromy about more than $6$ punctures have infinite mapping class group orbit. Hence Part (1) of \Cref{thm:inf-index} holds, as any genus one Lefschetz fibration has $12d > 6$ singular fibers, since $(\SL_2\ZZ)^{ab} \cong \ZZ/12\ZZ$.

\section{Index of the Braid Monodromy}\label{sec:index}

Let $\pi : M \to S$ be a Lefschetz fibration of genus $g$ with $\Delta$ its singular locus. As previously discussed, $\pi$ induces a subgroup $\Mod_\pi < \Mod(S,\Delta)$ of mapping classes which lift to a fiber-preserving diffeomorphism of $M$ (see \cref{defn:mod-sub-pi}). At this point, we are equipped to prove \Cref{thm:inf-index}, which we restate now in genus $g \geq 2$.
\begin{theorem}\label{prop:inf-index-sphere-auroux}
	Let $\pi : M \to S^2$ be a genus $g$ Lefschetz fibration over the sphere with singular locus $\Delta_\pi \subseteq S^2$. Then for the fiber-sum $\pi \fibsum \pi : M \fibsum M \to S^2$, $\Br(\pi \fibsum \pi)$ has infinite index in $\Mod(S^2, \Delta_{\pi \fibsum \pi})$.
\end{theorem}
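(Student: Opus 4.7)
The plan is to apply \Cref{thm:moishezon-matsumuto}, reducing the infinite-index question to showing that the orbit of $[\phi_{\pi \fibsum \pi}]$ under $\Mod(S^2, \Delta_{\pi \fibsum \pi})$ inside $\Hom(\pi_1(S^2 \setminus \Delta_{\pi \fibsum \pi}), \Mod(\Sigma_g))/\text{conj}$ is infinite. To exhibit an infinite orbit, I would find a single Dehn twist $T_\eta \in \Mod(S^2, \Delta_{\pi \fibsum \pi})$ whose powers move $[\phi_{\pi \fibsum \pi}]$ through infinitely many conjugacy classes of representations.

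To set up, let $(T_{\alpha_1}, \ldots, T_{\alpha_n})$ be the monodromy factorization of $\pi$, so that $T_{\alpha_1}\cdots T_{\alpha_n} = \Id$ in $\Mod(\Sigma_g)$. After a choice of gluing diffeomorphism $g \in \Mod(\Sigma_g)$, the monodromy factorization of $\pi \fibsum \pi$ takes the form $(T_{\alpha_1},\ldots,T_{\alpha_n},T_{\beta_1},\ldots,T_{\beta_n})$ with $\beta_i = g(\alpha_i)$, and the key structural feature is that \emph{each half independently is a factorization of $\Id$}. Since any two gluings produce weakly isomorphic fibrations, I will work below with the trivial gluing $\beta_i = \alpha_i$.

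I would then pick a simple closed curve $\eta \subset S^2 \setminus \Delta_{\pi \fibsum \pi}$ enclosing the first $n$ singular values together with exactly one puncture $p_{n+1}$ from the second half, so that
\[
  \phi_{\pi \fibsum \pi}(\eta) = T_{\alpha_1}\cdots T_{\alpha_n} \cdot T_{\alpha_1} = T_{\alpha_1},
\]
an element of infinite order in $\Mod(\Sigma_g)$. The Dehn twist $T_\eta$ acts on $\pi_1$ by conjugating each loop around a puncture inside $\eta$ by $\eta$ itself, so its $k$-th iterate yields a factorization $\phi_k = T_\eta^k \cdot \phi_{\pi \fibsum \pi}$ whose entries at positions $1,\ldots,n$ become $T_{T_{\alpha_1}^k(\alpha_i)}$, whose entry at position $n+1$ is fixed at $T_{\alpha_1}$, and whose remaining entries are unchanged.

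The main step, and the main obstacle, is verifying that the $\phi_k$ represent distinct points of the orbit. If $h \in \Mod(\Sigma_g)$ conjugates $\phi_k$ to $\phi_{k'}$, then matching the unchanged entries forces $h$ to centralize every $T_{\alpha_j}$, while matching the conjugated entries forces $h\,T_{\alpha_1}^{k-k'}$ to lie in the same centralizer, and hence $T_{\alpha_1}^{k-k'}$ to commute with each $T_{\alpha_i}$. By the standard commutation criterion for Dehn twists (Farb--Margalit), this holds only if $\alpha_1$ is disjoint from or isotopic to every $\alpha_i$, and I expect to rule this out whenever $\pi$ is nontrivial. Specifically, Dehn twists about pairwise disjoint simple closed curves generate a free abelian subgroup of $\Mod(\Sigma_g)$, in which no nontrivial positive word can equal $\Id$; hence some pair of vanishing cycles of $\pi$ must intersect, and after relabeling we may assume $\alpha_1$ intersects some $\alpha_i$. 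This forces $k = k'$, so the orbit of $[\phi_{\pi\fibsum\pi}]$ is infinite and $\Br(\pi\fibsum\pi)$ has infinite index.
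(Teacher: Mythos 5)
Your argument is correct, but it takes a genuinely different route from the paper's. The paper invokes Auroux's stability lemma \cite[Lemma 6(b)]{auroux-stable}: since each half of the doubled word is a factorization of the identity, the factorization $W\,W$ of $\pi \fibsum \pi$ is Hurwitz equivalent to $W\,(f\cdot W)$ for any $f$ in the subgroup generated by the vanishing-cycle twists; taking $f = T_{\delta_i}^k$ for an intersecting pair $\delta_i,\delta_j$ and distinguishing the resulting factorizations by the conjugation-invariant quantity $i(\delta_j, T_{\delta_i}^k\delta_j) = \abs{k}\,i(\delta_i,\delta_j)^2$ finishes the proof. You instead exhibit an explicit infinite-order element of $\Mod(S^2,\Delta_{\pi\fibsum\pi})$ --- the Dehn twist about a curve enclosing $n+1$ punctures, whose image under the monodromy is the single twist $T_{\alpha_1}$ precisely because the first half multiplies to the identity --- and you kill the global conjugation ambiguity by hand using the untouched copies of $T_{\alpha_1},\dots,T_{\alpha_n}$ in the second half. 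Both proofs rest on the same two structural facts (each half is a relator, and some pair of vanishing cycles must intersect since positive twists about pairwise disjoint curves generate a free abelian group) and both ultimately separate orbits via the linear growth of $i(T_c^k d, d)$. Your version avoids Auroux's lemma entirely and is closer in spirit to the paper's later Moishezon-spider arguments (\Cref{lemma:inf-int-2-boundary,lemma:inf-int-2}), which likewise combine an explicit braid with a centralizer argument.

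Two points need tightening. First, your construction requires the distinguished curve $\alpha_1$ (the one with $\phi_{\pi\fibsum\pi}(\eta)=T_{\alpha_1}$) to intersect some other $\alpha_i$, whereas the free-abelian argument only produces \emph{some} intersecting pair $\alpha_p,\alpha_q$; the ``after relabeling'' step should be justified, e.g.\ by taking $\eta$ to enclose the $n+1$ consecutive punctures $p,p+1,\dots,n+p$, so that $\phi_{\pi\fibsum\pi}(\eta)=T_{\alpha_p}$ while the complementary, unconjugated entries still realize every $T_{\alpha_i}$ --- or by noting that the Hurwitz moves bringing $T_{\alpha_p}$ to the front of $W$ preserve the existence of an intersecting partner for $\alpha_p$. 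Second, the claim that ``any two gluings produce weakly isomorphic fibrations'' is false in general (twisted fiber sums need not be isomorphic to untwisted ones); this is harmless here only because the theorem, like the paper's proof, concerns the untwisted sum whose factorization is $W\,W$, so no reduction is actually needed.
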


\begin{proof}
	Let $\de_1,\ldots,\de_n$ be vanishing cycles corresponding to the monodromy factorization $T_{\de_1}\cdots T_{\de_n} = \Id$ of $\pi$. Furthermore let $\operatorname{Mon}(\pi) \coloneqq \langle T_{\de_1},\ldots,T_{\de_n} \rangle$. We apply Auroux's lemma, which shows that if $T_{\delta_1},\ldots,T_{\delta_n}$ is a factorization of a central element in $\Mod(\Sigma_g)$ then $T_{\delta_1},\ldots,T_{\delta_n}$ is Hurwitz equivalent to $T_{f \delta_1},\ldots,T_{f\delta_n}$ for any $f \in \operatorname{Mon}(\pi)$ \cite[Lemma 6(b)]{auroux-stable}. Since $T_{\de_1}\cdots T_{\de_n} = \Id$, there must be some pair $\de_i,\de_j$ so that $i(\de_i,\de_j) \geq 1$, as otherwise these Dehn twists would generate a free abelian group. Let $f_k = T_{\de_i}^k \in \Mon(\pi)$, then by Auroux's lemma there is a Hurwitz equivalence
	\begin{align*}
		T_{\delta_1},\ldots,T_{\delta_n},T_{\delta_1},\ldots,T_{\delta_n} \sim T_{\delta_1},\ldots,T_{\delta_n},T_{f_k\delta_1},\ldots,T_{f_k\delta_n}
	\end{align*}
	where $T_{\delta_1},\ldots,T_{\delta_n}T_{\delta_1},\ldots,T_{\delta_n}$ is the monodromy factorization corresponding to of $\pi \fibsum \pi$. Thus each of these factorizations lie in the same Hurwitz orbit. By \Cref{thm:moishezon-matsumuto} it suffices to show there are infinitely many up to simultaneous conjugacy. A conjugation by $h \in \Mod(\Sigma_g)$ will not change the intersection number of vanishing cycles, and hence two such factorizations are distinguished by $i(\de_j,T_{\de_i}^k \de_j) = \abs{k}i(\de_i,\de_j)^2$ \cite[Proposition 3.2]{primer}.
\end{proof}

Part (1) of \Cref{thm:inf-index}, restated here for convenience, follows as a corollary.
\begin{corollary}\label{cor:ell-inf-index}
	For any nontrivial genus one Lefschetz fibration $\pi : M \to S^2$, $\Br(\pi)$ has infinite index in $\Mod(S^2,\Delta)$, where $\Delta \subseteq S^2$ is the singular locus.
\end{corollary}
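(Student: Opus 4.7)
The proof strategy is essentially pre-assembled in the preceding exposition: combine Moishezon's classification (Theorem \ref{thm:moishezon}) with Lam--Landesman--Litt's classification of finite mapping class group orbits of rank two local systems. Let me articulate how the steps fit together.

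First I would translate the statement into a question about Hurwitz orbits. By Theorem \ref{thm:moishezon-matsumuto} applied to $S = S^2$, the subgroup $\Br(\pi) = \Mod_\pi$ is precisely the $\Mod(S^2,\Delta)$-stabilizer of the conjugacy class $[\phi_\pi] \in Y_\Delta(\ZZ)$, where $\phi_\pi : \pi_1(S^2 \setminus \Delta) \to \Mod(\Sigma_1) \cong \SL_2\ZZ$ is the monodromy representation. Hence $[\Mod(S^2,\Delta) : \Br(\pi)]$ equals the cardinality of the $\Mod(S^2,\Delta)$-orbit of $[\phi_\pi]$, and it suffices to prove this orbit is infinite.

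Next I would pass from $\ZZ$-coefficients to $\CC$-coefficients. The natural map $Y_\Delta(\ZZ) \to Y_\Delta(\CC)$ is equivariant for the $\Mod(S^2,\Delta)$-action, so infiniteness of the orbit of $[\phi_\pi] \in Y_\Delta(\CC)$ implies the same for $[\phi_\pi] \in Y_\Delta(\ZZ)$. To check that the $\CC$-linear local system $\phi_\pi$ has infinite mapping class group orbit, I would verify the hypotheses of \cite[Corollary 1.1.8]{lam-lan-litt}: one needs infinite order local monodromy about more than $6$ punctures. Local monodromy about a puncture is a Dehn twist $T_{\alpha_i}$ on $\Sigma_1$, whose image in $\SL_2\ZZ$ is an infinite-order unipotent element, confirming the infinite-order condition. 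The puncture count follows from Theorem \ref{thm:moishezon}: a nontrivial genus one Lefschetz fibration over $S^2$ has $\abs{\Delta} = 12d \geq 12 > 6$ singular fibers (arising because $(\SL_2\ZZ)^{\mathrm{ab}} \cong \ZZ/12\ZZ$).

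The main (and really only) conceptual obstacle is the appeal to \cite[Corollary 1.1.8]{lam-lan-litt} - everything else is essentially a bookkeeping reduction. Once the hypotheses are verified, the conclusion that the $\Mod(S^2,\Delta)$-orbit of $[\phi_\pi] \in Y_\Delta(\CC)$ is infinite follows immediately, and by the equivariance and Moishezon--Matsumuto reductions above, $\Br(\pi)$ has infinite index in $\Mod(S^2,\Delta)$, completing the proof.
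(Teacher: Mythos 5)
Your argument is correct, but it is not the proof the paper attaches to \Cref{cor:ell-inf-index} --- it is essentially a reproduction of the derivation the paper gives at the end of \Cref{sec:prelim}, where Part (1) of the main theorem is deduced from the character-variety machinery of \cite{lam-lan-litt}. The paper's own proof of the corollary is different and elementary: by Moishezon's classification (\Cref{thm:moishezon}), every nontrivial genus one Lefschetz fibration over $S^2$ is an iterated fiber sum of $E(1)$, whose monodromy factorization $(T_\alpha, T_\beta, \ldots, T_\alpha, T_\beta)$ of length $12$ contains the subword $(T_\alpha T_\beta)^3$, which acts as $-\Id$ on $H_1(T^2;\ZZ)$ and is therefore central in $\Mod(T^2)$. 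Auroux's lemma then applies to that subfactorization exactly as in \Cref{prop:inf-index-sphere-auroux}: one obtains, inside a single Hurwitz orbit, factorizations containing the pair $(\de_j, T_{\de_i}^k\de_j)$ for all $k$, and these are pairwise non-conjugate because $i(\de_j, T_{\de_i}^k\de_j) = \abs{k}\, i(\de_i,\de_j)^2$ grows with $\abs{k}$. The trade-off between the two routes: the paper's proof of the corollary uses only Auroux's lemma and the intersection-number formula from the Primer, works uniformly with the genus $g \geq 2$ self-fiber-sum result (the corollary literally falls out of the same mechanism), and stays entirely inside mapping class group combinatorics; your route outsources the real work to the classification of finite mapping class group orbits of rank two local systems \cite[Corollary 1.1.8]{lam-lan-litt}, a much deeper input, in exchange for which the verification collapses to a two-line hypothesis check (infinite-order unipotent local monodromy about $12d > 6$ punctures) --- but it is intrinsically confined to genus one, where $\Mod(\Sigma_1) \cong \SL_2\ZZ$ identifies the monodromy with a rank two local system. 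Both proofs are valid, and both appear in the paper; you have simply chosen the one the abstract bills as the ``alternate'' derivation rather than the one given under this corollary's heading.
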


\begin{proof}
	Genus one Lefschetz fibrations are classified as iterated fiber sums of the standard elliptic fibration $E(1)$ with 12 nodal fibers\footnote{For a definition of $E(1)$, see \cite[\S 3.1]{gompf-stipsicz}} due to Moishezon (see \Cref{thm:moishezon}). Thus, it suffices to verify the result for $E(1)$. The monodromy of $E(1)$ generates $\Mod(T^2) \cong \SL_2\ZZ$ and for a geometric symplectic basis $\alpha,\beta$ of $T^2$ the monodromy factorization is $T_\alpha,T_\beta,\ldots,T_\alpha,T_\beta$ of length $12$. Note that Auroux's lemma applies for the subfactorization $(T_\alpha T_\beta)^3$ which acts as $-\Id$ on $H_1(T^2;\ZZ)$, and hence is central in $\Mod(T^2) \cong \Aut(H_1(T^2;\ZZ))$. The proof of \Cref{prop:inf-index-sphere-auroux} thus generalizes exactly to this setting.
\end{proof}

\begin{remark}
	The proof of \Cref{prop:inf-index-sphere-auroux} given above is inspired by a proof of Lee and Servan. They show that for any Lefschetz fibration $\pi$ of genus $g \geq 2$ over the sphere the fiber sum $\pi \fibsum \pi$ admits infinitely many homologically distinct sections \cite{seraphina-carlos}. 
\end{remark}

In \Cref{subsec:inf-index-general}, we will extend \Cref{prop:inf-index-sphere-auroux} to base surfaces with genus and/or boundary. Before doing so, we will first examine some exceptions where $\Br(\pi)$ is finite index over the disk.

\section{Finite-Index Exceptions}\label{subsec:finite-index}

Let $D$ be the unit disk with boundary, $D_n$ the disk with $n$ punctures, and fix some base point $b \in \partial D$. Furthermore let $\alpha,\beta$ be a geometric symplectic basis for the torus $T^2$. Consider the family $q_n : M_n \to D$ of Lefschetz fibrations given by the monodromy representations
\begin{align*}
	\phi_n : \pi_1(D_n, b) &\to \Mod(T^2) \cong \SL_2\ZZ \\
	\gamma_{2i+1} &\mapsto T_\alpha \cong \begin{pmatrix} 1 & -1 \\ 0 & 1 \end{pmatrix} \\
	\gamma_{2i} &\mapsto T_\beta \cong \begin{pmatrix} 1 & 0 \\ 1 & 1 \end{pmatrix},
\end{align*}
where $\gamma_i$ is a standard system of generators for $\pi_1(D_n,b)$, i.e., each $\gamma_i$ surrounds a single puncture counterclockwise and their product is homotopic to the boundary $\partial D_n$. A representation $\phi : \pi_1(D_n,b) \to \Mod(T^2)$ arising from a genus one fibration can be identified with a tuple $(\phi(\gamma_i))_{i=1}^n = (T_{\de_1},\ldots,T_{\de_n})$ of Dehn twists. The Hurwitz action given by the braid group $B_n = \Mod(D_n,\partial D_n)$ on the representation $\phi : \pi_1(D_n,b) \to \Mod(T^2)$ acts as follows for the standard half-twist generators $\sigma_i$ of $B_n$:
\begin{align}
	\sigma_i : (T_{\de_1},\ldots,T_{\de_n}) \mapsto (T_{\de_1},\ldots, T_{\de_i}T_{\de_{i+1}}T_{\de_{i}}^{-1}, T_{\de_i}, \ldots,T_{\de_n}) = (T_{\de_1},\ldots,T_{T_{\de_i} \de_{i+1}}, T_{\de_i},\ldots,T_{\de_n}). \label{eq:hurwitz}
\end{align}
Given the setup above, we show \Cref{thm:index-ell-disk-intro}, restated here for convenience.
\begin{theorem}\label{prop:index-for-ell-disk}
	For $n \leq 4$, $\Br(q_n)$ is finite index in $B_n$, with indices
	\begin{align*}
		[B_2 : \Br(q_2)] &= 3 && [B_3 : \Br({q_3})] = 8  && [B_4 : \Br({q_4})] = 27.
	\end{align*}
	For $n \geq 5$, $\Br({q_n})$ has infinite index in $B_n$.
\end{theorem}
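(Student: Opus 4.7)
By \Cref{thm:moishezon-matsumuto}, since $\partial D \neq \0$ the subgroup $\Br(q_n) < B_n$ equals the stabilizer of the representation $\phi_n \in \Hom(\pi_1(D_n,b), \Mod(T^2))$ under the Hurwitz action (no conjugation is needed because a basepoint on $\partial D$ is fixed). Orbit--stabilizer therefore gives $[B_n : \Br(q_n)] = |B_n \cdot \phi_n|$, and the theorem reduces to computing the cardinality of the Hurwitz orbit of $\phi_n$. Using the identification $T_c \leftrightarrow \pm c$ between Dehn twists on $T^2$ and primitive classes in $H_1(T^2;\ZZ)/\{\pm 1\}$, together with the formula $T_a b = b + \langle a, b\rangle a$, the Hurwitz move \eqref{eq:hurwitz} becomes an explicit combinatorial operation on $n$-tuples of primitive vectors in $\ZZ^2/\{\pm 1\}$.

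For the finite cases $n \in \{2,3,4\}$, I perform an exhaustive breadth-first search: start from $\phi_n$, iteratively apply all generators $\sigma_i^{\pm 1}$ to every known tuple via the combinatorial Hurwitz move, record new tuples, and repeat until closure. Termination of this procedure certifies finiteness, and the resulting orbit sizes are $3, 8, 27$, matching the stated indices. This enumeration is implemented in the Sage code \eqref{eq:code}.

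For the infinite cases $n \geq 6$, I reduce to the case $n = 5$ by induction. The inclusion $B_{n-1} \hookrightarrow B_n$ given by $\sigma_i \mapsto \sigma_i$ for $1 \leq i \leq n-2$ acts on $n$-tuples by fixing the final entry; since the first $n-1$ entries of $\phi_n$ coincide with those of $\phi_{n-1}$, the $B_{n-1}$-orbit of $\phi_n$ has the same cardinality as the $B_{n-1}$-orbit of $\phi_{n-1}$ and is contained in the $B_n$-orbit of $\phi_n$. Infiniteness thus propagates from $n = 5$ to all $n \geq 5$.

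The main obstacle is the base case $n = 5$: the Hurwitz orbit of $\phi_5 = (T_\alpha, T_\beta, T_\alpha, T_\beta, T_\alpha)$ must be shown to be infinite. The plan is to exhibit an explicit infinite family $\{w_k \cdot \phi_5\}_{k \in \ZZ}$ of pairwise distinct tuples in the orbit, distinguished by an unbounded invariant such as the geometric intersection number of the classes appearing at a fixed pair of positions; one constructs such $w_k$ by iterating a single braid $w \in B_5$ whose action steadily increases the $L^1$-norm of the class at a chosen position (via repeated application of transvections like $\delta \mapsto \delta + k\alpha$ disguised through Hurwitz moves), after which distinctness of the resulting tuples of primitive classes in $\ZZ^2/\{\pm 1\}$ is automatic. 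Constructing and verifying such a $w$ is a finite check automated by \eqref{eq:code}. As a conceptual alternative, one can cap off the boundary to pass to a rank-two local system $\bar\phi_5$ on the six-punctured sphere with parabolic monodromy, and appeal to \cite[Corollary 1.1.8]{lam-lan-litt}; however, since that classification only covers strictly more than six punctures, the $n = 5$ boundary case still requires checking by hand that $\bar\phi_5$ does not lie in the finite-orbit exceptional locus.
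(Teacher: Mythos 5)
Your treatment of $n \leq 4$ (exhaustive closure of the Hurwitz orbit by computer, then orbit--stabilizer) is exactly what the paper does, and your reduction of $n \geq 6$ to $n = 5$ via the standard inclusion $B_{n-1} \hookrightarrow B_n$ fixing the last entry is a valid (if slightly different) packaging of the paper's observation that the relevant configuration of vanishing cycles already sits inside the first five entries for every $n \geq 5$. The problem is the base case $n = 5$, which you correctly identify as the crux but then do not actually prove. You propose to find a braid $w \in B_5$ whose iterates $w^k$ produce tuples with unboundedly growing $L^1$-norm or intersection number at some position, and you assert that ``constructing and verifying such a $w$ is a finite check automated by'' the code. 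No finite computation can certify a statement about all iterates $w^k$: checking that the norm grows for $k = 1, \ldots, N$ does not imply it grows forever, and the Hurwitz action is not linear in a way that would make ``norm increased once'' propagate automatically. To make your sketch rigorous you need a structural input identifying \emph{why} the growth persists, and that input is precisely what is missing from the proposal.

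The paper supplies two such inputs, and you would need one of them (or an equivalent). First, a local Hurwitz calculation (\Cref{lemma:int-1-to-int-2}) converts the spider $(\alpha,\beta,\alpha,\beta,\alpha)$ with $i(\alpha,\beta)=1$ into one containing a pair $(T_\alpha\beta, T_\beta\alpha)$ with geometric intersection number $2$. Second (\Cref{lemma:inf-int-2-boundary}), once two adjacent vanishing cycles $\de_1,\de_2$ satisfy $i(\de_1,\de_2)\geq 2$, the twists $T_{\de_1},T_{\de_2}$ generate a free group $F_2$ (this is a theorem, \cite[Theorem 3.14]{primer}, not a computation), the elementary braid exchanging them acts on the restricted representation by the Artin automorphism of $F_2$, and faithfulness of $B_2 \to \Aut(F_2)$ plus surjectivity of the restricted representation forces the powers $\sigma^k$ to give pairwise distinct tuples. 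Alternatively one can argue as in \Cref{prop:inf-index-sphere-auroux} that a suitable entry of $w^k\cdot\phi_5$ is literally $T_{\de}^{k}\de'$ and invoke $i(\de', T_\de^k\de') = |k|\,i(\de,\de')^2$. Either route requires the intersection-number-$\geq 2$ configuration and a theorem about the group generated by the two twists; your proposal gestures at ``transvections disguised through Hurwitz moves'' but never establishes that the iterated action has this form. Your fallback via \cite[Corollary 1.1.8]{lam-lan-litt} fails for the reason you yourself note: capping off produces exactly six punctures, which is outside the range of the cited classification, so it cannot close the gap either.
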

We delay the proof of the second statement until \Cref{subsec:inf-index-general}, where we will prove a more general result.

\begin{proof}[Proof when $n \leq 4$]
	For $n \leq 4$, it suffices to compute the Hurwitz orbit. One can do so explicitly by constructing a directed graph whose edges are generators and whose vertices are representations of $\pi_1(D_n) \cong F_n$. The orbit is complete provided that the in/out-degree of each vertex is $n-1$, the number of generators of $B_n$. This computation is performed in Sage/Python, the code for which can be found at \eqref{eq:code}. These orbits are displayed in \Cref{fig:braid-orbits-intro} for $n=3,4$, the orbit for $n=2$ is simply a triangle.
\end{proof}

We will show in \Cref{subsec:inf-index-general} that $[B_n : \Br({q_n})]$ is infinite for $n \geq 5$. In this section we dedicate ourselves to the analysis of these finite-index exceptions where $n \leq 4$. Let $\sigma_{ij}$ be the braid exchanging puncture $i$ and $j$ along the interiors of $\gamma_i$ and $\gamma_j$, as usual we abbreviate $\sigma_i = \sigma_{i(i+1)}$. It is well-known that $\sigma_{ij} \in \Br({q_n})$ when $i \equiv j \pmod{2}$, and that $\sigma_{ij}^3 \in \Br({q_n})$ when $i \not\equiv j \pmod{2}$ (see \cite[Remark 5.4]{fl-smooth-mw} and \cite[Main Theorem]{lonne-braid-monodromy}). Using the orbit graphs in \Cref{fig:braid-orbits-intro}, one can find representatives for the cosets of $\Br({q_n})$ in $B_n$ for $n \leq 4$, and subsequently compute a presentation for $\Br({q_n})$ in GAP for $n \leq 4$ \cite{GAP4, FR2.4.13}.

\begin{proposition}\label{prop:fin-index-gen-set}
	$\Br({q_3})$ and $\Br({q_4})$ are generated as follows:
	\begin{align}
		\Br({q_3}) &= \langle \sigma_1^3, \sigma_{13} \rangle && \Br({q_4}) = \langle \sigma_1^3, \sigma_{13}, \sigma_{24} \rangle.
	\label{eq:presentations}
	\end{align}
	As a consequence, one finds that the abelianizations $\Br({q_3})^{ab}, \Br({q_4})^{ab}$ are free of rank $2$ and that $[PB_n : PB_n \cap \Br({q_n})] = [B_n : \Br({q_n})]$ for $n \leq 4$ (here $PB_n$ is the pure braid group). 	\begin{align*}
		\Br({q_3}) &= \langle \sigma_1^3,\sigma_{13} \mid (\sigma_{13}\sigma_1^3)^3 = (\sigma_1^3\sigma_{13})^3 \rangle.
	\end{align*}
	The analogous presentation of $\Br({q_4})$ is quite complex, requiring 13 relations where the maximum length relation has word length $36$.
\end{proposition}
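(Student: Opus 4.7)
The plan is to apply the Reidemeister--Schreier method to the orbit graphs of \Cref{fig:braid-orbits-intro}. First, fix a spanning tree in each orbit graph; this selects a Schreier transversal $T_n \subseteq B_n$ of coset representatives for $\Br({q_n})$. For every $t \in T_n$ and every standard generator $\sigma_i$ of $B_n$, the Reidemeister--Schreier rewriting produces an element $t\sigma_i\overline{t\sigma_i}^{-1} \in \Br({q_n})$, where $\overline{t\sigma_i} \in T_n$ is the coset representative of $t\sigma_i$. Discarding the trivial words coming from tree edges, these generate $\Br({q_n})$.

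Second, I would verify --- either by direct computation for $n = 3$ or in GAP via the coset table for $n = 4$ --- that every Reidemeister--Schreier generator can be rewritten as a word in $\sigma_1^3, \sigma_{13}$ (respectively, in $\sigma_1^3, \sigma_{13}, \sigma_{24}$). The reverse containment is immediate from the remarks preceding the statement: $\sigma_{ij} \in \Br({q_n})$ whenever $i \equiv j \pmod 2$, and $\sigma_{ij}^3 \in \Br({q_n})$ otherwise. The same algorithm also pulls back the defining braid relations of $B_n$ through each coset representative to yield a presentation. For $n = 3$ this collapses after Tietze simplification to the single relation $(\sigma_{13}\sigma_1^3)^3 = (\sigma_1^3\sigma_{13})^3$, while for $n = 4$ Tietze simplification in GAP produces the $13$-relation presentation in the statement.

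The abelianization claim then follows directly: the relation $(\sigma_{13}\sigma_1^3)^3 = (\sigma_1^3\sigma_{13})^3$ becomes the tautology $\sigma_{13}^3\sigma_1^9 = \sigma_1^9\sigma_{13}^3$ upon abelianization, so $\Br({q_3})^{ab} \cong \ZZ^2$; running the analogous abelianization in GAP on the $\Br({q_4})$ presentation returns a free abelian group of rank $2$ as well.

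For the final claim, note that $[PB_n : PB_n \cap \Br({q_n})] = [B_n : \Br({q_n})]$ is equivalent to surjectivity of the composition $\Br({q_n}) \hookrightarrow B_n \twoheadrightarrow S_n$. The proposed generators map to $\sigma_1^3 \mapsto (1\,2)$, $\sigma_{13} \mapsto (1\,3)$, and $\sigma_{24} \mapsto (2\,4)$, and for each $n \leq 4$ the relevant subset generates $S_n$ (for $n = 2$ a single transposition suffices). The main obstacle is purely computational: for $n = 4$, with $27$ cosets and relations of length up to $36$, neither the generator verification nor the Tietze reduction is feasible by hand, so everything must be pushed through GAP. No mathematical input beyond the orbit graphs already produced in the proof of \Cref{prop:index-for-ell-disk} is required.
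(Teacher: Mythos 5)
Your proposal matches the paper's own (largely implicit) argument: the paper likewise extracts coset representatives for $\Br(q_n)$ from the orbit graphs of \Cref{fig:braid-orbits-intro} and then computes the presentations, abelianizations, and the surjection onto $S_n$ by Reidemeister--Schreier/Tietze machinery in GAP, with the containment $\langle \sigma_1^3,\sigma_{13},\sigma_{24}\rangle \leq \Br(q_n)$ coming from the same cited facts about $\sigma_{ij}$ and $\sigma_{ij}^3$. Your write-up is correct and simply makes explicit the computational steps the paper delegates to GAP.
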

\Cref{prop:fin-index-gen-set-intro} from the introduction is simply the first part of \Cref{prop:fin-index-gen-set}. Notably, these generating sets agree with the generating sets derived in the algebraic category by L\"{o}nne. Having analyzed the finite-index exceptions, we turn our attention back to the general case.

\section{General Criteria}\label{subsec:inf-index-general}

We'll now devote ourselves to extending \Cref{prop:inf-index-sphere-auroux} over arbitrary compact connected oriented base surfaces $B$. We develop multiple criteria guaranteeing that $\Mod_\pi = \im(\Mod(\pi) \to \Mod(B,\Delta))$ has infinite index for a Lefschetz fibration $\pi : M \to B$ with singular fibers along $\Delta \subseteq B \setminus \partial B$. To do so succinctly, we require some additional language.
\begin{defn}[Moishezon spider]\label{defn:spider}
	Let $\pi : M \to B$ be a Lefschetz fibration of genus $g$ over a compact connected oriented base $B$ with singular locus $\Delta_\pi$. A \emph{Moishezon spider} for $\pi$ consists of the following data
	\begin{enumerate}
		\item An embedded disk $D$ in $B$, so that $\partial D \cap \Delta_\pi = \0$.
		\item A basepoint $b \in \partial D$, where if $B$ has boundary we require $b \in \partial D \cap \partial B$.
		\item A collection of arcs $a_1,\ldots,a_k$ in $D$ beginning at $b$ and ending at distinct singular values $p_1,\ldots,p_k \in \operatorname{int}(D) \cap \Delta_\pi$ of $\pi$, disjoint from $\partial D$ except at $b$. The arcs $a_1,\ldots,a_k$ are ordered counterclockwise by how they meet a sufficiently small neighborhood of $b$.
	\end{enumerate}
	The \textit{topological type} of a Moishezon spider is the collection of vanishing cycles $(\de_1,\ldots,\de_k)$ associated to $a_1,\ldots,a_k$ via $\pi$ in the fiber $\Sigma_g \cong \pi^{-1}(b)$ and is well-defined up to the action of $\Mod(\Sigma_g)$. The arcs $a_1,\ldots,a_k$ are called the \textit{legs of the spider}.
\end{defn}
Our first result concerns spiders of type $(\de_1,\de_2)$ where the geometric intersection number $i(\de_1,\de_2) \geq 2$.
\begin{lemma}\label{lemma:inf-int-2-boundary}
	Suppose that a genus $g$ Lefschetz fibration $\pi : M \to B$ over a compact connected surface $B$ with one boundary component admits a Moishezon spider with topological type $(\de_1,\de_2)$, where $i(\de_1,\de_2) \geq 2$. Then $\Mod_\pi$ has infinite index in $\Mod(B,\Delta_\pi)$.
\end{lemma}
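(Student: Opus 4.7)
The plan is to invoke \Cref{thm:moishezon-matsumuto}: since $\partial B \neq \emptyset$, the subgroup $\Mod_\pi$ equals the pointwise stabilizer of the monodromy representation $\phi_\pi$ under the natural action of $\Mod(B,\Delta_\pi)$, so it suffices to show the orbit of $\phi_\pi$ is infinite. I will produce such an orbit using the pure braid subgroup supported inside $D$.

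Let $\sigma \in \Mod(D, \{p_1, p_2\}, \partial D) \cong B_2$ be the standard half-twist, extended by the identity to an element of $\Mod(B, \Delta_\pi)$; this extension is well-defined because $b \in \partial D \cap \partial B$. Let $\gamma_1, \gamma_2 \in \pi_1(B \setminus \Delta_\pi, b)$ be the loops obtained from the spider legs $a_1, a_2$ by adjoining small counterclockwise loops around $p_1, p_2$. These lie in the subgroup $\pi_1(D \setminus \{p_1, p_2\}, b)$ and satisfy $\phi_\pi(\gamma_i) = T_{\delta_i}$ by definition of the vanishing cycles. Since $\sigma^{2k}$ is supported in $D$, it carries $\gamma_i$ into $\pi_1(D \setminus \{p_1, p_2\}, b)$, and the Hurwitz formula \eqref{eq:hurwitz} combined with a short induction gives
\begin{align*}
	(\sigma^{2k})^* \phi_\pi(\gamma_i) = (T_{\delta_1} T_{\delta_2})^k \, T_{\delta_i} \, (T_{\delta_1} T_{\delta_2})^{-k}, \quad i = 1, 2.
\end{align*}
In particular, if $(\sigma^{2k})^* \phi_\pi = \phi_\pi$ then $(T_{\delta_1} T_{\delta_2})^k$ centralizes $T_{\delta_1}$ in $\Mod(\Sigma_g)$.

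To finish, I would appeal to a classical theorem (due to Ishida; see also Hamidi-Tehrani) which states that whenever $i(\delta_1, \delta_2) \geq 2$, the Dehn twists $T_{\delta_1}, T_{\delta_2}$ generate a free group of rank two in $\Mod(\Sigma_g)$. In any rank-two free group on generators $a, b$, the centralizer of $a$ is the cyclic subgroup $\langle a \rangle$, which contains $(ab)^k$ only for $k = 0$. Hence $\{(\sigma^{2k})^* \phi_\pi : k \in \ZZ\}$ consists of pairwise distinct representations, so the orbit of $\phi_\pi$ under $\Mod(B, \Delta_\pi)$ is infinite and $\Mod_\pi$ has infinite index.

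The main technical input is the free-subgroup statement, which is the only place the hypothesis $i(\delta_1, \delta_2) \geq 2$ is used; if one prefers a self-contained argument, one can use Thurston's explicit homomorphism $\langle T_{\delta_1}, T_{\delta_2} \rangle \to \SL_2(\R)$ sending the two twists to unipotent matrices with off-diagonal entries $\pm m$, where $m = i(\delta_1, \delta_2) \geq 2$, and check by direct matrix computation that the image of $(T_{\delta_1} T_{\delta_2})^k$ has nonzero lower-left entry for every $k \neq 0$, hence does not centralize the image of $T_{\delta_1}$.
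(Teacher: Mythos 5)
Your proof is correct and takes essentially the same approach as the paper's: both restrict the monodromy to the subgroup $\langle T_{\de_1},T_{\de_2}\rangle$, invoke the fact that $i(\de_1,\de_2)\geq 2$ makes this a free group of rank two, and show the two-strand braid supported in the spider's disk acts with infinite order on the resulting representation. The only (cosmetic) difference is the endgame: the paper uses faithfulness of the Artin representation $B_2 \to \Aut(F_2)$ applied to all powers $\sigma^k$, while you pass to the pure braid powers $\sigma^{2k}$ and use that the centralizer of a generator of $F_2$ is the cyclic group it generates.
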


\begin{proof}
	Let $b \in \partial B$ and take $\gamma_1,\gamma_2 \in \pi_1(B \setminus \Delta_\pi,b)$ simple loops about singular fibers $p_1,p_2$ of $\pi$ with vanishing cycles $\de_1,\de_2$ respectively. The monodromy factorization $W$ of $\pi$ thus contains $W' = T_{\de_1}T_{\de_2}$ as a subword. Let $\sigma \in \Mod(B,\Delta_\pi)$ be the braid which exchanges $p_1,p_2$ and is supported on the component of $B \setminus \gamma_1 \cup \gamma_2$ containing $p_1,p_2$. We show that $\sigma^k \cdot W$ are distinct for all $k$, and hence the Hurwitz orbit is infinite, proving the lemma. To do so, note that $\sigma$ acts only on the subword $W'$. That is $\sigma$ acts on the representation
	\begin{align*}
		\phi' = \rest{\phi}{\langle \gamma_1,\gamma_2 \rangle}  : \langle \gamma_1,\gamma_2 \rangle \to \langle T_{\de_1},T_{\de_2} \rangle,
	\end{align*}
	where $\phi : \pi_1(B \setminus \Delta_\pi,b) \to \Mod(\Sigma_g)$ is the monodromy representation of $\pi$. By assumption, $i(T_{\de_1},T_{\de_2}) \geq 2$ and so $\langle T_{\de_1},T_{\de_2} \rangle \cong F_2$, the free group on two elements \cite[Theorem 3.14]{primer}. Let $\eta : F_2 \to F_2$ be the automorphism
	\begin{align*}
		\eta(T_{\de_1}) = T_{\de_2}, \psi(T_{\de_2}) = T_{\de_2}^{-1}T_{\de_1}T_{\de_2}.
	\end{align*}
	We find that by definition of the Hurwitz action $ \sigma \cdot \phi' = \eta \circ \phi'$. Hence $\sigma^k \cdot \phi = \sigma^j \phi$ if and only if $\eta^k \circ \phi' = \eta^j \circ \phi'$. Since $\phi'$ is surjective, this implies that $\eta^{k-j} = \Id$. But $\eta$ has infinite order, since the Artin representation $B_2 \to \Aut(F_2)$ mapping $\sigma$ to $\eta$ is faithful \cite[Theorem 14]{artin-braids}.
\end{proof}
The proof generalizes directly to the following lemma.
\begin{lemma}\label{lemma:inf-int-2}
	Let $\pi : M \to B$ be a Lefschetz fibration over a compact connected, oriented base $B$ with at most one boundary component containing a Moishezon spider  of type $(\de_1,\de_2,\de_1,\de_2)$ over singular values $p_1,p_2,p_3,p_4$ such that $i(\de_1,\de_2) \geq 2$. Then $\Mod_{\pi}$ has infinite index.
\end{lemma}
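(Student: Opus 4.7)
My plan is to adapt the strategy of \Cref{lemma:inf-int-2-boundary}, with the new issue being that when $\partial B = \emptyset$ I need the resulting factorizations to remain distinct modulo $\Mod(\Sigma_g)$-conjugation (as required by \Cref{thm:moishezon-matsumuto} in that setting). When $\partial B \neq \emptyset$, the first two legs of the given four-legged spider already form a Moishezon spider of type $(\delta_1, \delta_2)$ with $i(\delta_1, \delta_2) \geq 2$, so \Cref{lemma:inf-int-2-boundary} applies directly. Thus the main case is $\partial B = \emptyset$.

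Extend the given spider to a full spider system for $\pi$ and let $W$ be the resulting monodromy factorization, so that four of its entries, in counterclockwise order at $b$, are $T_{\delta_1}, T_{\delta_2}, T_{\delta_1}, T_{\delta_2}$. Let $\sigma \in \Mod(B, \Delta_\pi)$ be the half-twist exchanging $p_3$ and $p_4$, supported in a small disk containing the legs $a_3, a_4$ and disjoint from the rest of $\Delta_\pi$. By \eqref{eq:hurwitz}, $\sigma$ only modifies the third and fourth spider entries of $W$; a short computation in the Artin representation $B_2 \to \Aut(F_2)$ shows that $\sigma^2$ acts on those two entries by simultaneous conjugation by $f \coloneqq T_{\delta_1} T_{\delta_2}$. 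Hence $\sigma^{2k} \cdot W$ has $T_{f^k \delta_1}$ and $T_{f^k \delta_2}$ at spider positions $3$ and $4$, while every other vanishing cycle --- including $\delta_1, \delta_2$ at spider positions $1$ and $2$ --- is unchanged.

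By \Cref{thm:moishezon-matsumuto}, it suffices to show $\{\sigma^{2k} \cdot W\}_{k \in \ZZ}$ represents infinitely many $\Mod(\Sigma_g)$-conjugacy classes. Since geometric intersection of isotopy classes of simple closed curves is preserved by the diagonal $\Mod(\Sigma_g)$-action,
\[
	I_k \coloneqq i(\delta_1, f^k \delta_1)
\]
--- the intersection between the first and third spider vanishing cycles of $\sigma^{2k} \cdot W$ --- is a conjugacy-class invariant, and the proof reduces to showing $I_k \to \infty$ as $|k| \to \infty$.

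The main technical obstacle is this growth bound. I would establish it via Thurston's construction (cf.\ \cite[Thm.\ 14.1]{primer}): because $i(\delta_1, \delta_2) \geq 2$, there is a faithful representation $\rho : \langle T_{\delta_1}, T_{\delta_2}\rangle \to \PSL_2(\R)$ under which $\rho(f)$ has trace $2 - i(\delta_1, \delta_2)^2 \leq -2$. For $i(\delta_1, \delta_2) \geq 3$, $\rho(f)$ is hyperbolic and $f$ is pseudo-Anosov on the subsurface filled by $\delta_1 \cup \delta_2$, forcing exponential growth of $I_k$; for $i(\delta_1, \delta_2) = 2$, $\rho(f)$ is parabolic, and a direct computation modeled on the torus case (taking $\delta_1 = (1,0), \delta_2 = (1,2)$ gives $I_k = 4|k|$) yields linear growth. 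Either way, $I_k \to \infty$, completing the proof.
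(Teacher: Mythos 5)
Your overall strategy is sound and genuinely different from the paper's. The paper also reduces to the two ``active'' legs plus two ``spectator'' legs, but it twists the \emph{first} pair and uses the unchanged third and fourth legs as a marking: any global conjugating element $f$ must fix $\de_1$ and $\de_2$, hence centralizes $\langle T_{\de_1},T_{\de_2}\rangle\cong F_2$, so the problem collapses to the boundary case and is finished by faithfulness of the Artin representation $B_2\to\Aut(F_2)$ exactly as in \Cref{lemma:inf-int-2-boundary}. You instead twist the second pair and distinguish the resulting factorizations by the conjugacy invariant $I_k=i(\de_1,(T_{\de_1}T_{\de_2})^k\de_1)$, in the spirit of the intersection-number argument of \Cref{prop:inf-index-sphere-auroux}. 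Your preliminary steps are all correct: $\sigma^2$ does act on positions $3,4$ by simultaneous conjugation by $T_{\de_1}T_{\de_2}$, and since a Dehn twist determines its curve, the pairwise intersection numbers of the vanishing cycles at fixed positions are indeed invariants of the factorization up to global conjugation. The trade-off is that the paper's route needs only $\langle T_{\de_1},T_{\de_2}\rangle\cong F_2$ and Artin faithfulness, whereas yours imports Nielsen--Thurston machinery to control $I_k$.

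That import is where the one real gap sits: the case $i(\de_1,\de_2)=2$. Here $T_{\de_1}T_{\de_2}$ is reducible (parabolic trace $-2$ in Thurston's representation), so there is no pseudo-Anosov growth estimate to quote, and your ``direct computation modeled on the torus case'' does not literally apply: the subsurface filled by $\de_1\cup\de_2$ has Euler characteristic $-2$, so it is a four-holed sphere or a twice-holed torus, not a torus, and a homology computation with vectors $(1,0)$ and $(1,2)$ is not a proof that the \emph{geometric} intersection numbers on that subsurface are unbounded. The claim is true --- in the twice-holed torus case one can cap off and bound $I_k$ below by the torus computation, and in the four-holed sphere case one can pass to the hyperelliptic double cover and run the same linear-growth computation there --- but as written this step is asserted rather than proved, and it is the step carrying all the weight when $i(\de_1,\de_2)=2$. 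Either supply that two-case argument, or sidestep the issue entirely by switching to the paper's marking argument, which is insensitive to whether $T_{\de_1}T_{\de_2}$ is pseudo-Anosov.
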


\begin{proof}
	It suffices to show the Hurwitz orbit of the monodromy factorization is infinite up to global conjugation. We apply the proof of \Cref{lemma:inf-int-2-boundary} where $\sigma$ exchanges $p_1,p_2$ and note that if $\sigma^k \cdot \phi$ is conjugate to $\sigma^j \cdot \phi$ by $f \in \Mod(\Sigma_g)$, then $f(\de_1) = \de_1, f(\de_2) = \de_2$. Hence $f$ commutes with $\langle T_{\de_1},T_{\de_2} \rangle$, and so $\sigma^k \cdot \phi = f\sigma^j \phi f^{-1}$ if and only if $\eta^k \circ \phi' = \eta^j \circ \phi'$ as before.
\end{proof}

\begin{lemma}\label{lemma:int-1-to-int-2}
	Suppose a genus $g$ Lefschetz fibration $\pi : M \to B$ contains a Moishezon spider of type $(\alpha,\beta, \alpha, \beta, \alpha)$ over singular values $p_1,\ldots,p_5$ where $i(\alpha,\beta) = 1$. Then $\pi$ contains a Moishezon spider of type $(T_\alpha \beta, T_\beta \alpha)$.
\end{lemma}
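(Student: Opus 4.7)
The plan is to perform two Hurwitz moves on the given 5-leg spider and then restrict to a sub-collection of two of the resulting legs. Recall from \eqref{eq:hurwitz} that the Hurwitz move $\sigma_i$ takes a Moishezon spider of type $(\delta_1,\ldots,\delta_k)$ to a new spider on the same disk $D$ and basepoint $b$ of type $(\delta_1,\ldots,\delta_{i-1},\,T_{\delta_i}\delta_{i+1},\,\delta_i,\,\delta_{i+2},\ldots,\delta_k)$, geometrically by dragging the arc $a_{i+1}$ across $p_i$ along $a_i$ and relabeling. I will use this move purely formally on the topological type, noting that the underlying disk $D$ is unchanged.

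Starting from the given spider of type $(\alpha,\beta,\alpha,\beta,\alpha)$, I will first apply $\sigma_1$ to produce a spider of type $(T_\alpha\beta,\alpha,\alpha,\beta,\alpha)$, and then apply $\sigma_4$ to produce a spider of type $(T_\alpha\beta,\alpha,\alpha,T_\beta\alpha,\beta)$. Finally, I will pass to the sub-spider consisting of only the first and fourth legs, equipped with their induced counterclockwise ordering around $b$; this is still a valid Moishezon spider in the sense of \Cref{defn:spider}, and it has topological type $(T_\alpha\beta,T_\beta\alpha)$, as required.

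There is no serious obstacle: the two things to justify are (i) that the Hurwitz moves $\sigma_1,\sigma_4$ are realized by genuine modifications of the arcs $a_1,\ldots,a_5$ within the same disk $D$, and (ii) that deleting a subset of legs of a Moishezon spider again yields a Moishezon spider. Both are immediate from \Cref{defn:spider}, so the lemma reduces to the bookkeeping computation above. The point of the lemma is to feed into \Cref{lemma:inf-int-2}: since $i(\alpha,\beta)=1$ forces $i(T_\alpha\beta,T_\beta\alpha)\geq 2$ (by a standard intersection calculation on a neighborhood of $\alpha\cup\beta$), the resulting spider is precisely of the form consumed by that lemma.
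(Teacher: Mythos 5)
Your proof is correct and takes essentially the same approach as the paper: a local Hurwitz move computation on the spider inside its disk, followed by passing to a two-legged sub-spider. You use the shorter word $\sigma_4\sigma_1$ in place of the paper's $\sigma_2\sigma_1^2\sigma_4$ (which instead arranges $T_\alpha\beta$ and $T_\beta\alpha$ as adjacent legs), but the computation and the two justifications you flag are both consistent with the convention \eqref{eq:hurwitz} and with \Cref{defn:spider}.
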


\begin{proof}
	Let $\gamma_1,\ldots,\gamma_5$ be simple loops from $b \in B$ (if $B$ has boundary, take $b \in \partial B$) surrounding $p_1,\ldots,p_5$ respectively so that the monodromy about $\gamma_{2i+1}$ is $T_{\alpha}$ and the monodromy about $\gamma_{2i}$ is $T_{\beta}$. Take a small disk containing $\gamma_1,\ldots,\gamma_5$ and $p_1,\ldots,p_5$ embedded in $B$. Then the braid group $B_5$ acts on the monodromy restricted to $\langle \gamma_1,\ldots,\gamma_5 \rangle$ (equivalently, the subword of the monodromy factorization).

	We then perform a local calculation over this disk and the torus-with-boundary  $\Sigma_1^1$ formed from a neighborhood of $\alpha,\beta$ in the fiber $\Sigma_g$ over $b$. The allowed moves are to apply the braid $\sigma_i$ to the vanishing cycles $\de_1,\de_2,\de_3,\de_4,\de_5$ and obtain
	\begin{align*}
		\sigma_i : \de_1,\ldots,\de_i,\de_{i+1},\ldots,\de_5 \mapsto \de_1,\ldots,T_{\de_i}\de_{i+1},\de_i,\ldots,\de_5,
	\end{align*}
	and the inverse. We apply the word $\sigma_2\sigma_1^2\sigma_4$
	\begin{align*}
		(\alpha,\beta,\alpha,\beta,\alpha) &\xmapsto{\sigma_4} (\alpha,\beta,\alpha,T_\beta \alpha,\beta) \\
										   &\xmapsto{\sigma_1^2} (T_\alpha T_\beta T_{\alpha}^{-1} \alpha, T_\alpha \beta, \alpha, T_\beta \alpha, \beta) = (\beta,T_\alpha \beta, \alpha, T_\beta \alpha, \beta) \\
										   &\xmapsto{\sigma_2} (\beta, \beta, T_\alpha \beta, T_\beta \alpha, \beta).
	\end{align*}
	Standard computations then show that $i(T_\alpha \beta, T_\beta \alpha) = 2$. See \Cref{fig:local-calc-torus} for a purely visual computation of $\sigma_2\sigma_1^2\sigma_4$ applied to the monodromy factorization---note that, in \Cref{fig:local-calc-torus}, the curves are labeled $(\de_1,\de_2,\de_3,\de_4,\de_5)$ in each step to keep track of their indexing. Alternatively, one can identify $\Mod(\Sigma_1^1)$ with $\SL_2\ZZ$ to perform the calculation\footnotemark. This completes the proof.
	\begin{figure}[h]
		\centering
		\includegraphics[width=0.75\textwidth]{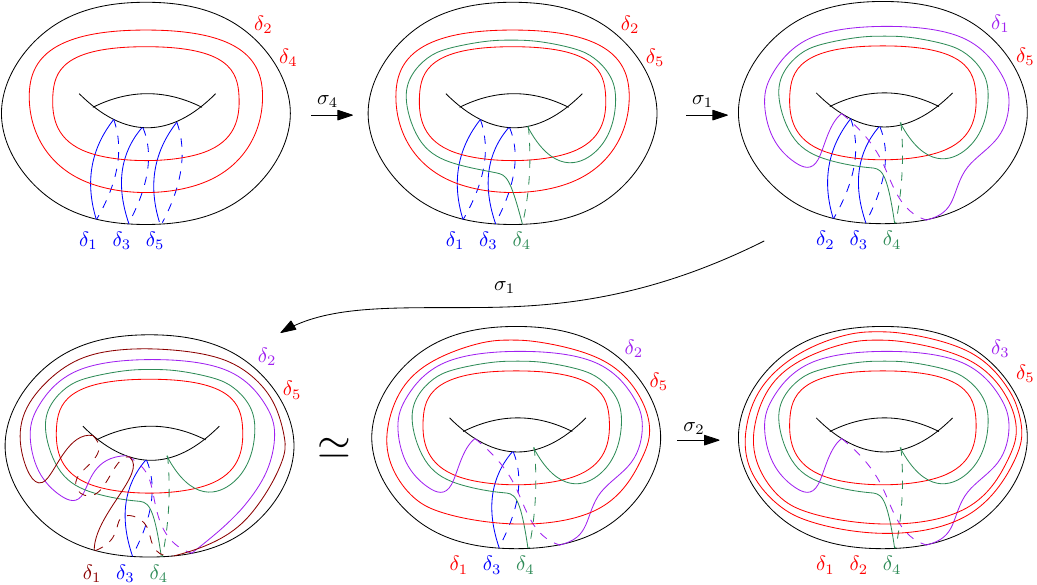}
		\caption{The Local Calculation of \Cref{lemma:int-1-to-int-2}.}
		\label{fig:local-calc-torus}
	\end{figure}
\end{proof}
\footnotetext{As one can guess, this is much easier in practice than directly computing with curves. In particular, one does not make so many orientation mistakes performing Dehn twists.}
\begin{remark}
	Direct calculation shows that $i(T_\alpha \beta, T_\beta \alpha) = 2$. We often use \Cref{lemma:int-1-to-int-2} to construct Moishezon spiders to apply \Cref{lemma:inf-int-2,lemma:inf-int-2-boundary}.
\end{remark}

Combining \Cref{lemma:inf-int-2-boundary} and \Cref{lemma:int-1-to-int-2} yields the following corollary promised in \Cref{subsec:finite-index}.
\begin{corollary}
	Let $n \geq 5$ and $q_n : M_n \to D^2$ be the genus one Lefschetz fibration with monodromy
	\begin{align*}
		\phi_n : \pi_1(D^2 \setminus \{p_1,\ldots,p_n\}) \to \SL_2\ZZ \\
		\phi_n(\gamma_{2i+1}) = \begin{pmatrix} 1 & -1 \\ 0 & 1 \end{pmatrix}, \quad \phi_n(\gamma_{2i}) = \begin{pmatrix} 1 & 0 \\ 1 & 1 \end{pmatrix}.
	\end{align*}
	Then
	\[
		[B_n : \Br(q_n)] = \infty
	\]
\end{corollary}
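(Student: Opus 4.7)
The plan is to reduce this corollary to a direct application of \Cref{lemma:int-1-to-int-2} followed by \Cref{lemma:inf-int-2-boundary}. Fix $n \geq 5$. By hypothesis, the first five generators $\gamma_1,\ldots,\gamma_5$ of $\pi_1(D^2\setminus\{p_1,\ldots,p_n\},b)$ have monodromies $T_\alpha, T_\beta, T_\alpha, T_\beta, T_\alpha$ respectively, where $\alpha,\beta$ is a geometric symplectic basis for $T^2$ and hence satisfies $i(\alpha,\beta) = 1$.

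First I would construct the initial Moishezon spider. Since $b \in \partial D^2$, I choose a small embedded disk $D \subseteq D^2$ meeting $\partial D^2$ at $b$ and enclosing the first five singular values $p_1,\ldots,p_5$ (and no others). Taking the legs $a_1,\ldots,a_5$ to be the straight arcs from $b$ used in defining the standard generators $\gamma_1,\ldots,\gamma_5$, and ordering them counterclockwise at $b$, this produces a Moishezon spider for $q_n$ of topological type $(\alpha,\beta,\alpha,\beta,\alpha)$, as required by \Cref{lemma:int-1-to-int-2}.

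Second, I would apply \Cref{lemma:int-1-to-int-2} to this spider, using $i(\alpha,\beta) = 1$, to obtain a Moishezon spider for $q_n$ of type $(T_\alpha\beta, T_\beta\alpha)$. By the direct calculation cited just after \Cref{lemma:int-1-to-int-2}, $i(T_\alpha\beta, T_\beta\alpha) = 2 \geq 2$. Since $D^2$ is compact, connected, oriented, and has exactly one boundary component, the hypotheses of \Cref{lemma:inf-int-2-boundary} are satisfied, and so $\Mod_{q_n} = \Br(q_n)$ has infinite index in $\Mod(D^2,\Delta) = B_n$.

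There is no real obstacle here: the corollary is a clean assembly of the two preceding lemmas once one recognizes that the alternating pattern $T_\alpha,T_\beta,T_\alpha,T_\beta,T_\alpha$ of the first five monodromies supplies exactly the spider required as input to \Cref{lemma:int-1-to-int-2}. The only minor bookkeeping is verifying that such a spider can genuinely be realized inside $D^2$ with basepoint on $\partial D^2$, which is immediate since the first five singular values can be enclosed in a small disk abutting the boundary.
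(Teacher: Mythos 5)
Your proposal is correct and matches the paper's intended argument exactly: the paper proves this corollary by the single remark that it follows from combining \Cref{lemma:int-1-to-int-2} (applied to the spider of type $(\alpha,\beta,\alpha,\beta,\alpha)$ furnished by the first five singular values) with \Cref{lemma:inf-int-2-boundary}. Your write-up simply makes the realization of the initial spider inside $D^2$ explicit, which is a fine and harmless elaboration.
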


\Cref{lemma:int-1-to-int-2,lemma:inf-int-2} also yield a new proof of Part (1) of \Cref{thm:inf-index}.
\begin{proof}[Alternate proof of Part (1) of \Cref{thm:inf-index}]
	Via Moishezon's Theorem \cite{moishezon}, any genus one Lefschetz fibration contains a Moishezon spider of type
	\begin{align*}
		(\alpha,\beta, \ldots, \alpha, \beta)
	\end{align*}
	with $12$ legs, such that $i(\alpha,\beta) = 1$. One then applies \Cref{lemma:int-1-to-int-2} twice before applying \Cref{lemma:inf-int-2}.
\end{proof}

Finally, we obtain a mild generalization of Part (2) of \Cref{thm:inf-index} to arbitrary base.
\begin{corollary}\label{cor:inf-index-general}
	Let $\pi : M \to B$ be a Lefschetz fibration of genus $g$, where $B$ is any connected, oriented surface, which admits a Moishezon spider of topological type $(\alpha,\beta)$ so that $i(\alpha,\beta) \neq 0$, then $\Mod_{\pi \fibsum \pi \fibsum \pi \fibsum \pi}$ has infinite index in $\Mod(B \connsum B \connsum B \connsum B)$.
\end{corollary}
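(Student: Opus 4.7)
Write $\tilde{\pi} \coloneqq \pi \fibsum \pi \fibsum \pi \fibsum \pi$ and $\tilde{B} \coloneqq B \connsum B \connsum B \connsum B$. Choose the regular fibers used in the three fiber-sum operations so that the four Moishezon spiders of type $(\alpha,\beta)$, one coming from each copy of $\pi$, can be isotoped into a single embedded disk $D \subset \tilde{B}$. This produces in $\tilde{\pi}$ a Moishezon spider $\Sigma$ of topological type $(\alpha,\beta,\alpha,\beta,\alpha,\beta,\alpha,\beta)$ over eight singular values inside $D$. If $i(\alpha,\beta) \geq 2$, the first four legs of $\Sigma$ form a sub-spider of type $(\alpha,\beta,\alpha,\beta)$ satisfying $i(\alpha,\beta)\geq 2$, and \Cref{lemma:inf-int-2} applies directly.

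In the remaining case $i(\alpha,\beta) = 1$, apply \Cref{lemma:int-1-to-int-2} via the Hurwitz word $\sigma_2\sigma_1^2\sigma_4$ to the first five legs of $\Sigma$: they become $(\beta,\beta,\gamma_1,\gamma_2,\beta)$, where $\gamma_1 \coloneqq T_\alpha\beta$ and $\gamma_2 \coloneqq T_\beta\alpha$ satisfy $i(\gamma_1,\gamma_2) = 2$, while the remaining three legs stay $(\beta,\alpha,\beta)$. Denote the resulting tuple $W_0 = (\beta,\beta,\gamma_1,\gamma_2,\beta,\beta,\alpha,\beta)$ and let $\sigma \in \Mod(\tilde{B}, \Delta_{\tilde{\pi}})$ be the half-twist exchanging the third and fourth singular values, supported in a small sub-disk of $D$. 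Setting $W_k \coloneqq \sigma^k \cdot W_0$, only the pair at positions $3,4$ varies with $k$, and the pairs $\sigma^k \cdot (\gamma_1,\gamma_2)$ are pairwise distinct: because $i(\gamma_1,\gamma_2) = 2$, the group $\langle T_{\gamma_1}, T_{\gamma_2}\rangle$ is free of rank two \cite[Theorem 3.14]{primer}, and the Artin representation $B_2 \to \Aut F_2$ is faithful \cite[Theorem 14]{artin-braids}. To handle the global conjugation ambiguity in \Cref{thm:moishezon-matsumuto} when $\tilde{B}$ is closed, suppose $f \in \Mod(\Sigma_g)$ satisfies $f \cdot W_k = W_j$. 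The anchor positions $1,2,5,6,8$ force $f(\beta) = \beta$ and position $7$ forces $f(\alpha) = \alpha$; hence $f$ commutes with $T_\alpha$ and $T_\beta$, and therefore with $T_{\gamma_1} = T_\alpha T_\beta T_\alpha^{-1}$ and $T_{\gamma_2} = T_\beta T_\alpha T_\beta^{-1}$, and moreover fixes $\gamma_1$ and $\gamma_2$. Thus $f$ commutes with the entire $\sigma$-action on $(\gamma_1,\gamma_2)$, forcing $\sigma^k \cdot (\gamma_1,\gamma_2) = \sigma^j \cdot (\gamma_1,\gamma_2)$ and hence $k = j$. The Hurwitz orbit of $\tilde{\pi}$'s monodromy representation is therefore infinite, and \Cref{thm:moishezon-matsumuto} yields $[\Mod(\tilde{B}, \Delta_{\tilde{\pi}}) : \Mod_{\tilde{\pi}}] = \infty$.

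\textbf{Main obstacle.} The delicate point is the anchor argument in the closed case: \emph{both} $\alpha$ and $\beta$ must appear among the $\sigma$-fixed positions in $W_0$, for without an $\alpha$ anchor some $f$ exchanging $\alpha$ and $\beta$ could conceivably collapse distinct $W_k$'s into a single conjugacy class. This is precisely why four fiber summands (hence eight legs) are needed: three summands yield only six legs, whose post-Hurwitz anchor cycles are all $\beta$, missing the required $\alpha$ anchor and breaking the argument.
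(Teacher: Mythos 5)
Your proposal is correct and follows essentially the same route as the paper: concatenate the four spiders into an eight-leg spider of type $(\alpha,\beta,\ldots,\alpha,\beta)$, run the argument of \Cref{lemma:int-1-to-int-2} on five legs to produce an intersection-two pair, and use the leftover legs as a marking to kill the conjugation ambiguity before invoking the faithfulness of the Artin representation as in \Cref{lemma:inf-int-2}. Your explicit treatment of the case $i(\alpha,\beta)\geq 2$ via \Cref{lemma:inf-int-2} directly, and your identification of why both an $\alpha$- and a $\beta$-anchor are needed, are exactly the points the paper's one-line proof leaves implicit.
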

The proof is identical to the proof above of Part (1) of \Cref{thm:inf-index}. Three of the fiber sums are used to produce the five vanishing cycles of \Cref{lemma:int-1-to-int-2}. The remaining two vanishing cycles serve as a marking.

\section{Proof of \texorpdfstring{\Cref{thm:holomorphic-genus-two}}{Theorem 1.3}}\label{sec:holomorphic}

Let $\Sigma_g$ be a connected oriented surface of genus $g \geq 2$. Fix curves $\alpha_1,\ldots,\alpha_{2g+1}$ on $\Sigma_g$ as in \Cref{fig:hyperelliptic}. With $a_i = T_{\alpha_i}$, the following are monodromy factorizations in $\Mod(\Sigma_g)$
\begin{align}
	(a_1\cdots a_{2g+1}^2 \cdots a_2a_1)^2 &= 1 \label{eq:eta-1} \\
	(a_1\cdots a_{2g})^{2(2g+1)} &= 1 \label{eq:eta-2} \\
	(a_1\cdots a_{2g+1})^{2g+2} &= 1 \label{eq:eta-3},
\end{align}
see \cite[Proposition 4.12, \S 5.1.4]{primer}. Each of these monodromy factorizations gives rise to a Lefschetz fibration:
\begin{align}
	\eta_1 : X(1) \to S^2 \\
	\eta_2 : X(2) \to S^2 \\
	\eta_3 : X(3) \to S^2,
\end{align}
where $X(1),X(2),X(3)$ are the total spaces of these fibrations. 
\begin{figure}
	\centering
	\includegraphics{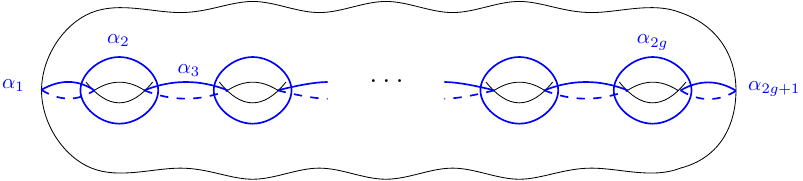}
	\caption{Curves $\alpha_1,\ldots,\alpha_{2g+1}$ on $\Sigma_g$}
	\label{fig:hyperelliptic}
\end{figure}
\Cref{thm:holomorphic-genus-two} follows from the following theorem.

\begin{theorem}\label{thm:three-examples}
	For any genus $g \geq 2$, each of the three Lefschetz fibrations $\eta = \eta_1,\eta_2,\eta_3$ satisfies 
	\[
		[\Mod(S^2,\Delta_\eta) : \Br(\eta)] = \infty.
	\]
\end{theorem}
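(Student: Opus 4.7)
The plan is to apply \Cref{lemma:inf-int-2} and \Cref{lemma:int-1-to-int-2} to each of the three Lefschetz fibrations $\eta_1, \eta_2, \eta_3$, following the template of the alternate proof of Part (1) of \Cref{thm:inf-index} given at the end of \Cref{subsec:inf-index-general}. The common observation is that each factorization contains many copies of the Dehn twists $T_{\alpha_1}$ and $T_{\alpha_2}$, where $i(\alpha_1, \alpha_2) = 1$: $\eta_2$ has $2(2g+1) \geq 10$ copies of each $T_{\alpha_i}$ for $1 \leq i \leq 2g$, $\eta_3$ has $2g+2 \geq 6$ copies for $1 \leq i \leq 2g+1$, while $\eta_1$ has exactly $4$ copies of each.

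For $\eta_2$ and $\eta_3$, I would use Hurwitz moves on the global factorization (together with the commutation of $T_{\alpha_i}$ and $T_{\alpha_j}$ whenever $|i-j| \geq 2$) to construct two disjoint Moishezon spiders, each of topological type $(\alpha_1, \alpha_2, \alpha_1, \alpha_2, \alpha_1)$. Applying \Cref{lemma:int-1-to-int-2} to each yields two disjoint 2-leg spiders of type $(T_{\alpha_1} \alpha_2, T_{\alpha_2} \alpha_1)$ with intersection number $2$, and combining them produces a 4-leg Moishezon spider of type $(\delta_1, \delta_2, \delta_1, \delta_2)$ where $\delta_1 = T_{\alpha_1} \alpha_2$ and $\delta_2 = T_{\alpha_2} \alpha_1$. \Cref{lemma:inf-int-2} then gives $[\Mod(S^2, \Delta_\eta) : \Br(\eta)] = \infty$.

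For $\eta_1$ the direct approach fails: with only $4 + 4 = 8$ singular fibers having vanishing cycles in $\{\alpha_1, \alpha_2\}$, two disjoint 5-leg spiders cannot be constructed (this would require $10$ such fibers). Instead, I would build a single 5-leg Moishezon spider of type $(\alpha_1, \alpha_2, \alpha_1, \alpha_2, \alpha_1)$ (using 3 copies of $\alpha_1$ and 2 of $\alpha_2$), apply \Cref{lemma:int-1-to-int-2} to produce a 2-leg spider of type $(T_{\alpha_1} \alpha_2, T_{\alpha_2} \alpha_1)$, and augment with two additional singular fibers (with vanishing cycles $\alpha_1$ and $\alpha_2$ respectively) serving as ``markings''. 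This yields a 4-leg Moishezon spider of type $(T_{\alpha_1} \alpha_2, T_{\alpha_2} \alpha_1, \alpha_1, \alpha_2)$.

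The hard part will be adapting \Cref{lemma:inf-int-2} to this non-standard pattern. I would establish a slight generalization: if a 4-leg Moishezon spider has type $(\delta_1, \delta_2, \delta_3, \delta_4)$ with $i(\delta_1, \delta_2) \geq 2$, such that any $f \in \Mod(\Sigma_g)$ fixing $\delta_3, \delta_4$ also fixes $\delta_1, \delta_2$, then $\Mod_\pi$ has infinite index. The proof mirrors \Cref{lemma:inf-int-2} essentially verbatim, and the hypothesis is automatically satisfied in the $\eta_1$ setup because $f$ fixing the markings $\alpha_1, \alpha_2$ commutes with $T_{\alpha_1}, T_{\alpha_2}$ and hence fixes both $T_{\alpha_1}(\alpha_2)$ and $T_{\alpha_2}(\alpha_1)$. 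A secondary subtlety in all three cases is verifying that the specified Moishezon spiders are actually realizable in $S^2 \setminus \Delta_\eta$ by suitable choices of arcs, which requires careful local analysis combining the multiplicities of vanishing cycles with the disjointness relations among the $\alpha_j$.
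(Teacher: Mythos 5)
Your proposal is correct and follows essentially the same route as the paper: $\eta_2,\eta_3$ are handled by two disjoint applications of \Cref{lemma:int-1-to-int-2} followed by \Cref{lemma:inf-int-2}, and $\eta_1$ by the same marking/centralizer trick --- the paper's markings are the leftover legs $\alpha_2,\de_2'$ with $i(\alpha_2,\de_2')=1$ rather than $\alpha_1,\alpha_2$, but the mechanism (any $f$ fixing the markings must centralize $\langle T_{\alpha_1},T_{\alpha_2}\rangle$ and hence fix the two braided cycles, reducing to the Artin faithfulness argument) is identical to your proposed generalization of \Cref{lemma:inf-int-2}. The realizability and bookkeeping subtlety you flag at the end is genuine, and it is exactly what the paper's explicit $8$-leg spider computation for $\eta_1$ is designed to resolve.
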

We first sketch the implication of \Cref{thm:holomorphic-genus-two} from \Cref{thm:three-examples}.

\begin{proof}[\Cref{thm:three-examples} implies \Cref{thm:holomorphic-genus-two}]
	Let $\pi : M \to S^2$ be a nontrivial genus $g = 2$ holomorphic Lefschetz fibration whose vanishing cycles are all nonseparating. By Chakiris' classification, $\pi$ must be a fiber sum $\eta_1^{\fibsum k_1} \fibsum \eta_2^{\fibsum k_2} \fibsum \eta_3^{\fibsum k_1}$, where $\eta_i^{\fibsum k}$ denotes the fiber sum of $\eta_i$ with itself $k$ times \cite[Main Theorem]{chakiris}. Let $\rho_\pi : \pi_1(S^2 \setminus \Delta_\pi) \to \Mod(\Sigma_g)$ be the monodromy representation of $\pi$. The Hurwitz orbit of $[\rho_\pi]$ contains the Hurwitz orbits for each component of the fiber sum, at least one of which is infinite by \Cref{thm:three-examples} since $\pi$ is nontrivial. Hence the Hurwitz orbit of $[\rho_\pi]$ is infinite. The result follows as $\Br(\pi) = \Stab [\rho_\pi]$ by \Cref{thm:moishezon-matsumuto}.
\end{proof}
With the implication above in hand, we conclude by proving \Cref{thm:three-examples}.

\begin{proof}[Proof of \Cref{thm:three-examples}]
	Let 
	\[
		\rho : \pi_1(S^2 \setminus \Delta_{\eta_1}) \to \Mod(\Sigma_g)
	\]
	be the monodromy representation of the Lefschetz fibration $\eta_1$. It suffices to show that the conjugacy class $[\rho]$ has infinite orbit under the Hurwitz action of $\Mod(S^2,\Delta_{\eta_1})$ by Matsumuto's theorem (see \Cref{thm:moishezon-matsumuto} above). By inspection of the monodromy factorization \eqref{eq:eta-1}, $\eta_1$ contains a Moishezon spider of type
	\begin{align}
		(\de_1,\ldots,\de_8) = (\alpha_1,\alpha_2,\alpha_2,\alpha_1,\alpha_1,\alpha_2,\alpha_2,\alpha_1),\label{spider:basic}
	\end{align}
	where $i(\alpha_1,\alpha_2) = 1$. By applying the argument of \Cref{lemma:int-1-to-int-2}, we produce from the spider \eqref{spider:basic} a new Moishezon spider of type
	\begin{align*}
		(\de_1',\ldots,\de_8') = (\alpha_2,\de_2', \alpha_2, T_{\alpha_1}\alpha_2, \alpha_1, T_{\alpha_2}\alpha_1, \alpha_2, \alpha_2),
	\end{align*}
	where $i(\alpha_2,\de_2') = 1$. Let $\gamma_1,\ldots,\gamma_8$ be simple loops about single punctures with monodromies $\rho(\gamma_i) = T_{\de_i'}$, the Dehn twist about $\de_i'$. Let $\sigma$ be the braid exchanging the punctures corresponding to $\gamma_4,\gamma_6$. It suffices to show that if $\sigma^k \cdot \rho$ is conjugate to $\sigma^j \cdot \rho$ then $k = j$. Suppose there exists some $f \in \Mod(\Sigma_g)$ so that 
	\begin{equation}
		f(\sigma^k \cdot \rho)f^{-1} = \sigma^j \cdot \rho.\label{eq:conj-equal}
	\end{equation}
	Evaluating both sides of \eqref{eq:conj-equal} on $\gamma_1,\gamma_2$ yields
	\begin{align*}
		fT_{\alpha_2}f^{-1} = f\rho(\gamma_1)f^{-1} = \rho(\gamma_1) = T_{\alpha_2} && fT_{\de_2'}f^{-1} = f\rho(\gamma_2)f^{-1} = \rho(\gamma_2) = T_{\de_2'},
	\end{align*}
	since $\sigma$ is supported away from $\gamma_1,\gamma_2$. Note that $\langle T_{\alpha_1},T_{\alpha_2} \rangle = \langle T_{\alpha_1},T_{\de_2'}\rangle$ since these both generate the mapping class group of the torus-with-boundary formed from a tubular neighborhood of $\alpha_1,\alpha_2$. Now consider the restrictions
	\begin{align*}
		\rest{f(\sigma^k \cdot \rho)f^{-1}}{\langle \gamma_4, \gamma_6 \rangle } = \rest{(\sigma^j \cdot \rho)}{\langle \gamma_4,\gamma_6 \rangle } : \langle \gamma_4,\gamma_6 \rangle \to \langle T_{T_{\alpha_1} \alpha_2}, T_{T_{\alpha_2} \alpha_1} \rangle \cong F_2,
	\end{align*}
	where the final isomorphism holds because $i(T_{\alpha_1} \alpha_2, T_{\alpha_2} \alpha_1) \geq 2$ \cite[Theorem 3.14]{primer}. Applying the equality $T_{T_{\alpha_i} \alpha_j} = T_{\alpha_i}T_{\alpha_j}T_{\alpha_i}^{-1}$ and the fact that $f$ acts by the identity via conjugation on $\langle T_{\alpha_1},T_{\alpha_2} \rangle$ yields that
	\begin{align*}
		\sigma^k \cdot \rest{\rho}{\langle \gamma_4,\gamma_6 \rangle} = \sigma^j \cdot \rest{\rho}{\langle \gamma_4,\gamma_6 \rangle} : \langle \gamma_4,\gamma_6 \rangle \to F_2.
	\end{align*}
	As in the proof of \Cref{lemma:inf-int-2-boundary}, the equality $\sigma^k \cdot \rho = \sigma^j \cdot \rho$ must then imply that $k = j$, by the faithfulness of the Artin representation $B_2 \to \Aut(F_2)$ \cite[Theorem 14]{artin-braids}. Thus $[\rho]$ has infinite Hurwitz orbit, and $[\Mod(S^2,\Delta_{\eta_1}) : \Br(\eta_1)] = \infty$.

	We now proceed to $\eta_2$ and $\eta_3$ respectively. By inspection of the monodromy factorization \eqref{eq:eta-2} (resp. \eqref{eq:eta-3}), we see that $\eta_2$ (resp. $\eta_3$) contains a Moishezon spider of type
	\begin{align*}
		(\de_1,\ldots,\de_{10}) = (\alpha_1,\alpha_2,\alpha_1,\alpha_2,\ldots,\alpha_1,\alpha_2).
	\end{align*}
	Applying \Cref{lemma:int-1-to-int-2} twice yields a Moishezon spider of type 
	\[
		(T_{\alpha_1}\alpha_2, T_{\alpha_2}\alpha_1, T_{\alpha_1}\alpha_2, T_{\alpha_2}\alpha_1).
	\]
	Therefore, \Cref{lemma:inf-int-2} implies that $\Br(\eta_2)$ (resp. $\Br(\eta_3)$) has infinite index inside of $\Mod(S^2,\Delta_{\eta_2})$ (resp. $\Mod(S^2,\Delta_{\eta_3})$).
\end{proof}

\printbibliography

\end{document}